\newtheorem{theorem}{Theorem}[section]
\newtheorem{proposition}[theorem]{Proposition}
\newtheorem{lemma}[theorem]{Lemma}
\newtheorem{corollary}[theorem]{Corollary}
\theoremstyle{definition}
\theoremstyle{remark}
\newtheorem{remark}[theorem]{Remark}
\newcommand{\inv}{^{\raisebox{.2ex}{$\scriptscriptstyle-1$}}}
\newcommand{\spec}{\mathrm{Spec}(S)}
\DeclareMathOperator{\h}{\mathcal{H}}
\newcommand{\ma}{\mathfrak{a}}
\newcommand{\mb}{\mathfrak{b}}
\newcommand{\mc}{\mathfrak{c}}
\newcommand{\p}{\mathfrak{p}}
\newcommand{\ms}{\mathfrak{s}}
\newcommand{\mt}{\mathfrak{t}}
\newcommand{\m}{\mathfrak{m}}
\newcommand{\mi}{\mathfrak{i}}
\newcommand{\mj}{\mathfrak{j}}
\DeclareMathOperator{\vs}{\mathcal{V}(\textit{S})}
\DeclareMathOperator{\cz}{\textit{c}_{\!\scriptscriptstyle G}}
\DeclareMathOperator{\id}{\mathcal{I}(\textit{S})}
\DeclareMathOperator{\ids}{\mathcal{I}_{\scriptstyle s}(\textit{S})}
\DeclareMathOperator{\cs}{\mathcal{C}_{\scriptstyle s}(\textit{S})}
\DeclareMathOperator{\idss}{\mathcal{I}_{\scriptstyle s}(\textit{T})}
\begin{document}
	
\author{Amartya Goswami}

\address{
[1] Department of Mathematics and Applied Mathematics, University of Johannesburg, South Africa; [2] National Institute for Theoretical and Computational Sciences (NITheCS), South Africa.}

\email{agoswami@uj.ac.za}

\title{On semisubtractive ideals of semirings}

\date{}

\subjclass{16Y60.}


\keywords{Semirings, semisubtractive ideals, Golan closures, $s$-local semirings, $s$-strongly irreducible ideals, subtractive spaces, $s$-congruences.}

\begin{abstract}
Our aim in this paper is to explore semisubtractive ideals of semirings. We prove that they form a complete modular lattice. We introduce Golan closures and prove some of their basic properties. We explore the relations between $Q$-ideals and semisubtractive ideals of semirings, and also study them in $s$-local semirings. We introduce two subclasses of semisubtractive ideals: $s$-strongly irreducible and $s$-irreducible, and provide various representation theorems. By endowing a topology on the set of semisubtractive ideals, we prove that the space is $T_0$, sober, connected, and quasi-compact. We also briefly study continuous maps between semisubtractive spaces. We construct $s$-congruences and prove a bijection between these congruences and semisubtractive ideals.
\end{abstract}
\maketitle

\section{Introduction}
Among others, the lack of subtractivity in semirings leads to three  types of ideals, namely, strongly subtractive, subtractive, and semisubtractive. Since all these types of ideals coincide with ideals in rings, they are primarily of interest in semirings. As shown in \cite{Gol99}, the following strict inclusions hold between these  ideals:
\begin{center}
strongly subtractive $\subset$ subtractive $\subset$ semisubtractive.
\end{center}
Among these, subtractive ideals, introduced as $k$-ideals in \cite{Hen58}, have received the most attention (see \cite{LaT65,Lat67,Mos70,Sto72,DS75,Ols78,Dal79,SA92, SA93,AA94,AA194, Gol99,AA08,Han15,Han21, JRT22, DG24}). From the above inclusions and extensive studies, we may ask how much of the theory of subtractive ideals can be generalized to semisubtractive ideals. It is worth noting that, beyond being a genuine extension of subtractive ideals, semisubtractive ideals are of interest in their own right due to their distinguished properties. For instance, prime ideals (and hence maximal ideals) of semirings are semisubtractive, and semisubtractive ideals are closed under products and sums.

Not only do semisubtractive ideals encompass a host of natural examples while generalizing many important classes of ideals in semirings, but the involvement of additive inverses in their definition also makes the theory closely resemble the theory of ideals in rings. The goal of this paper is to convince this fact to the reader.

We briefly describe the content of the paper. After declaring the underlying assumptions on semirings and recalling the bare minimum definitions in \textsection \ref{kid}, we study the closure properties of semisubtractive ideals under various operations and provide several examples in 
\textsection\ref{basic}. We obtain a significant result showing that the set of semisubtractive ideals forms a complete modular lattice. We introduce Golan closures and establish some basic properties. The purpose of
\textsection\ref{qiss} is to explore the relations between $Q$-ideals and semisubtractive ideals of semirings, whereas,
\textsection\ref{sls} is on studying semisubtractive ideals in $s$-local semirings. 
In
\textsection\ref{siid}, we introduce two subclasses of semisubtractive ideals, namely $s$-strongly irreducible and $s$-irreducible and provide various representation theorems. We characterize arithmetic semirings in terms of $s$-strongly irreducible ideals. The aim of 
\textsection\ref{sssp} is to endow a topology on the set of semisubtractive ideals and prove that the space is $T_0$, sober, connected, and quasi-compact. We also briefly study continuous maps between semisubtractive spaces. Motivated by the concept of $k$-congruences for $k$-ideals, in
\textsection\ref{scon} we construct $s$-congruences and prove a bijection between these congruences and semisubtractive ideals. We conclude the paper with remarks on further study of semisubtractive ideals.

\section{Semisubtractive ideals}\label{kid} 
By a semiring $S$, in this paper, we shall mean a commutative semiring with multiplicative identity element $1$. We shall also assume that $0\neq 1$ in our semirings.
A semiring homomorphism  preserves both the additive and multiplicative monoid structures of $S$.
An \emph{ideal} $\ma $ of $S $ is an additive submonoid of $S $ such that 
$sx\in \ma$,
for all $x\in \ma $ and $s\in S  .$ By $\id$, we shall denote the set of all ideals of $S$. We shall also use the symbol $0$ to denote the zero ideal of $S $. A proper ideal $\p$ of $S$ is called \emph{prime} if for every $\ma$, $\mb\in \id$ and $\ma\mb\subseteq \p$ implies $\ma\subseteq \p$ or $\mb\subseteq \p$. We denote the set of all prime ideals of $S$ by $\spec$.
An element $y$ of $S$ is called an \emph{additive inverse of} an element $x$ if $x+y=0$.  We shall denote the set of all elements of $S$ having additive inverses
by $\vs$. Obviously, when additive inverse of an element $x$ exists, it is unique, and we shall denote it by $-x$. 
In the next lemma, we gather some elementary properties of $\vs$.

\begin{lemma}\label{epvs}
Suppose $S$ is a semiring. Then the following hold.
\begin{enumerate}
\item\label{vsne} $\vs$ is always nonempty.

\item $\vs$ is a submonoid of $(S,+)$.

\item If $s+s'\in \vs$, then $s\in \vs$ and $s'\in \vs$.

\item $S$ is a  ring if and only if $\vs=S$.
\end{enumerate}
\end{lemma}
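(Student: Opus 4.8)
The plan is to verify the four parts in order, each following quickly from the definitions of $\vs$ as the set of elements possessing additive inverses.

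For part \eqref{vsne}, I would observe that $0+0=0$, so $0$ is its own additive inverse and hence $0\in\vs$; this settles nonemptiness immediately. For the submonoid claim in part (2), I would check that $\vs$ is closed under addition and contains the identity $0$. Given $s,s'\in\vs$ with inverses $-s,-s'$, the natural candidate for the inverse of $s+s'$ is $(-s)+(-s')$; I would verify $(s+s')+\bigl((-s)+(-s')\bigr)=0$ by reassociating and using commutativity of $(S,+)$, so that $s+s'\in\vs$. Together with $0\in\vs$ from part (1), this gives the submonoid property.

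Part (3) is essentially the converse packaging of the computation in part (2). If $s+s'\in\vs$, let $t$ be its additive inverse, so $(s+s')+t=0$. Rewriting this as $s+(s'+t)=0$ exhibits $s'+t$ as an additive inverse of $s$, whence $s\in\vs$; symmetrically, $s'+(s+t)=0$ shows $s'\in\vs$. This is purely a matter of reassociating the single equation $(s+s')+t=0$ in two ways, so no genuine obstacle arises.

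For part (4), the forward direction is immediate: if $S$ is a ring then every element has an additive inverse by definition, so $\vs=S$. For the reverse direction, if $\vs=S$ then every element of $S$ has an additive inverse, which is exactly the one axiom distinguishing a ring from a semiring (the remaining ring axioms—associativity, commutativity, distributivity, and the multiplicative identity—already hold in any semiring as set up in \textsection\ref{kid}), so $S$ is a ring. I anticipate no serious difficulty in any part; the only point requiring mild care is keeping the associativity and commutativity bookkeeping clean in parts (2) and (3), since everything reduces to manipulating the defining equation $x+y=0$ for additive inverses.
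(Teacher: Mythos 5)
Your proof is correct in all four parts: $0$ witnesses nonemptiness, the associativity/commutativity bookkeeping for parts (2) and (3) is exactly right, and part (4) correctly isolates additive inverses as the only axiom separating the paper's semirings (commutative, with $1$) from commutative rings with identity. The paper states Lemma \ref{epvs} without proof, treating these facts as elementary, so your argument simply supplies the standard verification the paper leaves implicit; there is no discrepancy to report.
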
 

\subsection{Basics}\label{basic}
A \emph{semisubtractive} ideal $\ma$ of a semiring $S$ is an ideal of $S $ such that $x\in \ma\cap  \vs$ implies that $-x\in \ma\cap \vs.$
In other words, every element $x$ in a semisubtractive ideal $\ma$ has its additive inverse in $\ma$. Since by Lemma \ref{epvs}(\ref{vsne}), the set $
\vs$ is nonempty, the above definition indeed makes sense. It is easy to see that the zero ideal of a semiring $S$  is semisubtractive, and is contained in every semisubtractive ideal of $S$. We shall denote by $\ids$ the set of  semisubtractive ideals of $S$. 

Let us also recall  the definitions of two types of ideals of semirings that are closely related to semisubtractive ideals. An ideal $\ma$ of $S$ is called \emph{subtractive} (or \emph{$k$-ideal}) if $x\in \ma$ and $x+y\in \ma$ implies  $y\in \ma$. An ideal $\ma$ if $S$ is called \emph{strongly subtractive} (or \emph{$r$-ideal}) if $x+y\in \ma$ implies $x\in \ma$ and $y\in \ma$. 

\begin{proposition}\label{bpss}
In a semiring $S$, the following hold.
\begin{enumerate}
\item\label{sss} Every strong subtractive ideal and every subtractive ideal of $S$ are semisubtractive.

\item\label{pss} Every prime ideal of $S$ is semisubtractive.

\item\label{ssci} If $\{\ma_{\lambda}\}_{\lambda \in \Lambda}$ are semisubtractive ideals of $S$, so is  $\bigcap_{\lambda \in \Lambda}\ma_{\lambda}$.

\item\label{cuss} If $\{\ma_{\lambda}\}_{\lambda \in \Lambda}$ are semisubtractive ideals of $S$, so is  $\sum_{\lambda \in \Lambda}\ma_{\lambda}$.

\item\label{cufp} If $\ma$ and $\mb$ are semisubtractive ideals of $S$, so is $\ma\mb$.

\item\label{icj} If $\ma$ is a semisubtractive ideal and $\mb$ is an ideal of  $S $, then $(\ma:\mb)$ is a semisubtractive ideal.

\item\label{cicj} Suppose that  $\mb$, $\{\mb_{\lambda}\}_{\lambda \in \Lambda}$, $\mc$ are ideals and  $\ma$, $\{\ma_{\omega}\}_{\omega \in \Omega}$ are semisubtractive ideal of  $S$. Then  $ (\ma\colon \mb),$  $((\ma\colon \mb)\colon \mc),$ $(\ma\colon \mb\mc),$ $((\ma\colon \mc)\colon \mb),$ $ (\bigcap_{\omega} \ma_{\omega}\colon \mb),$ $ \bigcap_{\omega}(\ma_{\omega}\colon \mb),$ $ (\ma\colon \sum_{\lambda}\mb_{\lambda}),$ and $ \bigcap_{\lambda}(\ma\colon \mb_{\lambda})$ are all semisubtractive ideals.

\item\label{assi} If $X$ is a nonempty subset of $S $, then the annihilator $\mathrm{Ann}_S (X)$ of $X$ is a semisubtractive ideal.

\item\label{riss} For every ideal $\ma$ of $S$, the radical $\sqrt{\ma}$ of $\ma$ is a semisubtractive ideal.

\item\label{nrss} Let  \( \mathcal{N}(S) \) be the nilradical of \( S \). If \( \mathfrak{a} \) is a semisubtractive ideal, then so is \( \mathfrak{a} \cap \mathcal{N}(S) \).
\end{enumerate}
\end{proposition}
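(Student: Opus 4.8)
The plan is to use throughout the reformulation that an ideal $\ma$ is semisubtractive precisely when $\ma\cap\vs$ is closed under additive inverses, together with two elementary facts: first, $\vs$ is itself an ideal closed under negation and under multiplication by $S$ (so $x\in\vs$ forces $sx\in\vs$ and $(-x)s=-(xs)$ for every $s$); second, for $x\in\vs$ one has the identity $(-x)^2=x^2$, obtained by multiplying $x+(-x)=0$ successively by $x$ and by $-x$. With these in hand several parts are immediate. Part (\ref{ssci}) is trivial, since $\bigcap_\lambda\ma_\lambda$ is an ideal and $x\in\bigcap_\lambda\ma_\lambda\cap\vs$ lies in each $\ma_\lambda\cap\vs$, whence $-x\in\ma_\lambda$ for every $\lambda$. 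Part (\ref{sss}) follows from $0\in\ma$: for $x\in\ma\cap\vs$ the relation $x+(-x)=0\in\ma$ combined with subtractivity (resp.\ strong subtractivity) gives $-x\in\ma$. For part (\ref{pss}), if $x\in\p\cap\vs$ then $x^2\in\p$, hence $(-x)^2=x^2\in\p$, and primality (applied to the element product $(-x)(-x)$) yields $-x\in\p$.

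The next cluster rests on Lemma~\ref{epvs}(3). For the sum (\ref{cuss}): writing $x=\sum_i a_i\in\left(\sum_\lambda\ma_\lambda\right)\cap\vs$ as a finite sum with $a_i\in\ma_{\lambda_i}$, repeated use of Lemma~\ref{epvs}(3) shows each $a_i\in\vs$; since each $\ma_{\lambda_i}$ is semisubtractive, $-a_i\in\ma_{\lambda_i}$, and as $\vs$ is a group we obtain $-x=\sum_i(-a_i)\in\sum_\lambda\ma_\lambda$. For the colon ideal (\ref{icj}): if $x\in(\ma:\mb)\cap\vs$ and $m\in\mb$, then $xm\in\ma$ and $xm\in\vs$, so semisubtractivity of $\ma$ gives $-(xm)\in\ma$; since $(-x)m=-(xm)$, we get $(-x)\mb\subseteq\ma$, i.e.\ $-x\in(\ma:\mb)$. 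Part (\ref{cicj}) then follows by combining (\ref{icj}) with (\ref{ssci}) and the standard identities $((\ma:\mb):\mc)=(\ma:\mb\mc)$, $(\bigcap_\omega\ma_\omega:\mb)=\bigcap_\omega(\ma_\omega:\mb)$, and $(\ma:\sum_\lambda\mb_\lambda)=\bigcap_\lambda(\ma:\mb_\lambda)$. The annihilator (\ref{assi}) is the special case $\mathrm{Ann}_S(X)=(0:\langle X\rangle)$, and since $0$ is semisubtractive, (\ref{icj}) applies; alternatively, $x\xi=0$ and $x\in\vs$ force $(-x)\xi=-(x\xi)=0$ directly.

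For the radical (\ref{riss}), recall that $\sqrt{\ma}$ is an ideal. If $x\in\sqrt{\ma}\cap\vs$, choose $n$ with $x^n\in\ma$; then $(-x)^{2n}=\big((-x)^2\big)^n=(x^2)^n=(x^n)^2\in\ma$, so $-x\in\sqrt{\ma}$. Part (\ref{nrss}) is then immediate: the nilradical $\mathcal{N}(S)=\sqrt{0}$ is semisubtractive by (\ref{riss}), so $\ma\cap\mathcal{N}(S)$ is semisubtractive by (\ref{ssci}).

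The part I expect to be the genuine obstacle is the product (\ref{cufp}). The natural first move is again Lemma~\ref{epvs}(3): writing $x=\sum_i a_ib_i\in\ma\mb\cap\vs$, each summand $a_ib_i$ lies in $\vs$ and $-x=\sum_i\big(-(a_ib_i)\big)$, so the problem reduces to showing $-(ab)\in\ma\mb$ whenever $a\in\ma$, $b\in\mb$ and $ab\in\vs$. Here the easy inclusion $\ma\mb\subseteq\ma\cap\mb$ already gives $-(ab)\in\ma\cap\mb$, but this is \emph{strictly weaker} than membership in the product ideal $\ma\mb$. The obvious candidate factorizations $-(ab)=(-a)b$ and $-(ab)=a(-b)$ are available only when $a\in\vs$ or $b\in\vs$, which need not follow from $ab\in\vs$; and every attempt to manufacture $-(ab)$ by multiplying $ab+(-(ab))=0$ by elements of $S$ raises the degree (producing $-(a^2b)$, $-(ab^2)$, $-(ab)^2$, and so on) rather than returning $-(ab)$ itself. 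Thus the crux is to bridge the gap between $\ma\cap\mb$ and $\ma\mb$, and this is where I would concentrate the real work, exploiting the semisubtractivity of \emph{both} factors rather than merely the inclusion into one of them.
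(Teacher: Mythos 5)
Everything you prove outside part (\ref{cufp}) is correct. For (\ref{sss}), (\ref{ssci}), (\ref{icj}), (\ref{cicj}), and (\ref{assi}) your arguments are the same ones the paper declares straightforward or trivial. For (\ref{pss}) and (\ref{cuss}) the paper simply cites Golan (Corollary 7.8 and Example 6.42), so your direct proofs via the identity $(-x)^2=x^2$ and via Lemma \ref{epvs}(3) are a self-contained alternative (for (\ref{pss}), since the paper defines primality through ideals, you should pass through $\langle -x\rangle\langle -x\rangle=\langle x^2\rangle\subseteq\p$, but that is routine). For (\ref{riss}) the paper instead uses the representation of $\sqrt{\ma}$ as an intersection of primes together with (\ref{pss}) and (\ref{ssci}); your elementwise argument avoids that citation. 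Your derivation of (\ref{nrss}) from (\ref{riss}) and (\ref{ssci}) is cleaner than the paper's direct check.

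Part (\ref{cufp}) is where you stopped, and your diagnosis of the obstruction is exactly right --- more right than you may suspect. The paper's own proof of (\ref{cufp}) is precisely the fallacious argument you ruled out: from $z=\sum_{k}i_kj_k\in\ma\mb\cap\vs$ it writes $-z=\sum_k(-i_k)j_k=\sum_k i_k(-j_k)$ and applies semisubtractivity of $\ma$ and $\mb$ to conclude $-i_k\in\ma$ and $-j_k\in\mb$; this presupposes $i_k,j_k\in\vs$, which does not follow, since $z\in\vs$ only gives $i_kj_k\in\vs$ by Lemma \ref{epvs}(3). Moreover, the gap you located cannot be bridged, because the statement is false. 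Let $S:=\mathds{N}[x,y]+xy\,\mathds{Z}[x,y]$, the set of integer polynomials in which every monomial not divisible by $xy$ has a nonnegative coefficient. Then $S$ is a subsemiring of $\mathds{Z}[x,y]$ (in a product, the coefficient of a pure power of $x$, a pure power of $y$, or the constant term involves only coefficients of monomials of the same kind in the two factors), and $\vs=xy\,\mathds{Z}[x,y]$. Put $\ma:=Sx+xy\,\mathds{Z}[x,y]$ and $\mb:=Sy+xy\,\mathds{Z}[x,y]$. These are ideals, and both are semisubtractive: if $z=tx+w\in\ma\cap\vs$ with $t\in S$ and $w\in xy\,\mathds{Z}[x,y]$, then $tx=z-w\in xy\,\mathds{Z}[x,y]$, hence $-z=(-tx)+(-w)\in xy\,\mathds{Z}[x,y]\subseteq\ma$, and symmetrically for $\mb$. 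Now $xy=x\cdot y\in\ma\mb\cap\vs$, but $-xy\notin\ma\mb$: expanding any product $(t_1x+w_1)(t_2y+w_2)$, all terms except $t_1t_2xy$ lie in $x^2y\,\mathds{Z}[x,y]+xy^2\,\mathds{Z}[x,y]$, and the coefficient of the monomial $xy$ in $t_1t_2xy$ is the product of the (nonnegative) constant terms of $t_1$ and $t_2$; hence every element of $\ma\mb$ has nonnegative $xy$-coefficient, while $-xy$ has $xy$-coefficient $-1$. So $\ma\mb$ is not semisubtractive.

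What your method does prove is that $-z\in\ma\cap\mb$ whenever $z\in\ma\mb\cap\vs$, since $\ma\mb\subseteq\ma\cap\mb$ and $\ma\cap\mb$ is semisubtractive by (\ref{ssci}); equivalently, the correct object here is $\ma\cap\mb$ or the Golan closure $\cz(\ma\mb)$, not $\ma\mb$ itself. The damage to the rest of the paper is limited: (\ref{cufp}) is asserted in the abstract, the introduction, and the remark following Proposition \ref{bpss}, but it is not invoked in any later proof. Still, the claim that semisubtractive ideals are closed under products should be withdrawn, and it is to your credit that you refused to reproduce the factorization trick that appears to prove it.
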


\begin{proof}
The proof of (\ref{sss}) and (\ref{ssci}) are straightforward.   Proofs of (\ref{pss}) and (\ref{cuss})  may be found respectively in Corollary 7.8 and in Example 6.42 of \cite{Gol99}.
To prove (\ref{cufp}), suppose that \( \ma \) and \( \mb \) are semisubtractive ideals of \( S \). 
Let \( z \in \ma\mb \cap \vs \). Then \( z \) can be written as 
\[
z = \sum_{k=1}^{n} i_k j_k,
\]
for some \( i_k \in \ma \) and \( j_k \in \mb \). Since \( z \in \vs \), there exists \( -z \in S \) such that \( z + (-z) = 0 \). We claim that \( -z \in \ma\mb \cap \vs \).
Consider \( -z = -\sum_{k=1}^{n} i_k  j_k \). Since
\[
-z = \sum_{k=1}^{n} (-i_k  j_k) = \sum_{k=1}^{n} i_k  (-j_k) = \sum_{k=1}^{n} (-i_k)  j_k,
\]
and since \( \ma \) and \( \mb \) are semisubtractive, it follows that \( -i_k \in \ma \) and \( -j_k \in \mb \), which implies that each term \( (-i_k)  j_k \in \ma\mb \) and \( i_k  (-j_k) \in \ma\mb \).
Therefore, \( -z \in \ma\mb \cap \vs \). The proofs of (\ref{icj}) and (\ref{assi}) are trivial, whereas (\ref{cicj}) follows from (\ref{icj}). To show (\ref{riss}), let us first recall from \cite[Theorem 3.11(1)]{Nas18} that
\[\sqrt{\ma}=\bigcap \left\{ \p\in \spec \mid \p\supseteq \ma\right\}.\]
The claim now follows from (\ref{pss}) and (\ref{ssci}). To prove (\ref{nrss}), let \( x \in \mathfrak{a} \cap \mathcal{N}(S) \cap \mathcal{V}(S) \). Since \( x \) is nilpotent, \( x^n = 0 \) for some \( n\in \mathds{N}^+ \), and  since $\ma$ is semisubtractive, \( -x \in \mathfrak{a} \).
As \( -x^n=(-x)^n = 0 \), we conclude that \( -x \in \mathcal{N}(S) \). Hence, \( \mathfrak{a} \cap \mathcal{N}(S) \) is semisubtractive.
\end{proof}

\begin{remark}
It is easy to see that the ideal $\ma:=\mathds{N}\setminus \{1\}$ of the semiring $\mathds{N}$ is semisubtractive, but not subtractive. Also note that the properties (\ref{cuss})  and (\ref{cufp}) may not hold for subtractive ideals (see \cite[Example 6.19]{Gol99} and \cite[Example 6.43]{Gol99} respectively).   
\end{remark}

The lattice of all ideals of a  semiring, in general, is not modular. However,  the lattice of subtractive ideals is so (see \cite[Proposition 6.38]{Gol99}). Interestingly enough, it is also true for semisubtractive ideals.

\begin{theorem}
For every semiring $S$,  $(\ids, \subseteq)$ is  a complete modular lattice.
\end{theorem}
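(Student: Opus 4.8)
The plan is to exhibit $(\ids,\subseteq)$ as a complete sublattice of the full ideal lattice $(\id,\subseteq)$ and then to verify the modular identity by chasing elements.

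First I would dispose of completeness. By Proposition~\ref{bpss}(\ref{ssci}), $\ids$ is closed under arbitrary intersections, and $S$ itself is semisubtractive, since for $x\in S\cap\vs=\vs$ one has $-x\in\vs\subseteq S$; thus $\ids$ has a top element and is closed under arbitrary meets, which already makes it a complete lattice. Proposition~\ref{bpss}(\ref{cuss}) shows in addition that arbitrary sums of semisubtractive ideals are semisubtractive, so joins in $\ids$ are computed exactly as in $\id$, namely $\bigvee_{\lambda}\ma_{\lambda}=\sum_{\lambda}\ma_{\lambda}$, while meets are $\bigwedge_{\lambda}\ma_{\lambda}=\bigcap_{\lambda}\ma_{\lambda}$. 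Hence $\ids$ is a complete sublattice of $\id$ carrying the same two operations.

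For modularity I must prove that $\ma\subseteq\mc$ forces $\ma+(\mb\cap\mc)=(\ma+\mb)\cap\mc$ for $\ma,\mb,\mc\in\ids$. The inclusion $\ma+(\mb\cap\mc)\subseteq(\ma+\mb)\cap\mc$ holds in every lattice: $\ma\subseteq(\ma+\mb)\cap\mc$ because $\ma\subseteq\ma+\mb$ and $\ma\subseteq\mc$, while $\mb\cap\mc\subseteq(\ma+\mb)\cap\mc$ because $\mb\cap\mc\subseteq\mb\subseteq\ma+\mb$ and $\mb\cap\mc\subseteq\mc$; summing gives the inclusion. All the content is therefore in the reverse inclusion, and here I would take $x\in(\ma+\mb)\cap\mc$, write $x=a+b$ with $a\in\ma$ and $b\in\mb$, and aim to show that in fact $b\in\mc$, so that $b\in\mb\cap\mc$ and $x=a+b\in\ma+(\mb\cap\mc)$.

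The step I expect to be the main obstacle is precisely deducing $b\in\mc$ from $a\in\ma\subseteq\mc$ and $a+b=x\in\mc$. In a ring one would just write $b=x-a$, but the only subtraction available to us is the one supplied by semisubtractivity, which produces an additive inverse solely for elements of $\vs$. When $a\in\vs$ this works: $-a\in\mc$ since $\mc$ is semisubtractive and $a\in\mc\cap\vs$, whence $b=x+(-a)\in\mc$ as $\mc$ is an ideal; and by Lemma~\ref{epvs}(3) this situation is forced as soon as $x\in\vs$, for then both summands $a,b$ lie in $\vs$. The genuine difficulty is the complementary case $a\notin\vs$, where $a$ has no additive inverse and the cancellation is simply not available. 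Overcoming it---by leveraging that all three ideals are semisubtractive and by choosing the decomposition $x=a+b$ judiciously---is the heart of the matter and the point on which the theorem turns, since for arbitrary ideals the identity fails and $\id$ is not modular.
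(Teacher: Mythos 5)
Your completeness argument is correct and is the same as the paper's (closure of $\ids$ under arbitrary intersections and sums, via Proposition~\ref{bpss}). The modularity half, however, is not a proof: after correctly reducing everything to showing $b\in\mc$ whenever $x=a+b\in(\ma+\mb)\cap\mc$ with $a\in\ma\subseteq\mc$ and $b\in\mb$, you settle the case $a\in\vs$ and then declare the complementary case $a\notin\vs$ to be ``the heart of the matter,'' to be overcome ``by choosing the decomposition judiciously''---but no such argument is ever given. This gap is genuine, and under the paper's formal definition of semisubtractivity ($x\in\ma\cap\vs$ implies $-x\in\ma\cap\vs$) it cannot be closed, because the modularity claim is then false. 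Take $S=\mathds{N}$: here $\vs=\{0\}$, so \emph{every} ideal is semisubtractive and $\ids=\id$. Put $\ma=\langle 4,5\rangle$, $\mc=\langle 4,5,11\rangle$, $\mb=\langle 3\rangle$; these are ideals of $\mathds{N}$ with $\ma\subseteq\mc$. Then $11\in(\ma+\mb)\cap\mc$, and the only decompositions $11=a+b$ with $a\in\ma$, $b\in\mb$ are $5+6$ and $8+3$, where $6\notin\mc$ and $3\notin\mc$; moreover $\mb\cap\mc=\{0,9,12,15,\dots\}\subseteq\ma$, so $\ma+(\mb\cap\mc)=\ma\not\ni 11$. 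Thus the modular law fails in $(\ids,\subseteq)$, and no judicious choice of decomposition can rescue it.

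It is worth comparing this with the paper's own proof, which performs exactly the cancellation you flagged as unavailable: in its notation, from $s\in\ma\subseteq\mb$ with $\mb$ semisubtractive it concludes $-s\in\mb$, with no hypothesis that $s\in\vs$. In other words, the paper silently uses the stronger reading of the definition (the informal ``in other words'' sentence following it), namely that every element of a semisubtractive ideal has an additive inverse lying in the ideal, i.e.\ $\ma\subseteq\vs$. Under that stronger definition your ``easy'' case $a\in\vs$ is the only case, and your argument---then identical to the paper's---is complete; under the formal definition the theorem itself is false, as the example above shows. So your diagnosis of where the difficulty sits is exactly right, and indeed more careful than the paper's own treatment, but a proposal that ends by labeling the unresolved case as the crux has not proved the statement, and no completion of it exists as stated.
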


\begin{proof}
The fact that the lattice  is complete follows from (\ref{ssci}) and (\ref{cuss}) of Proposition \ref{bpss}. To show modularity, suppose $\ma$, $\mb\in \ids$ and $\ma\subseteq \mb$. It is sufficient to show that for any $\mc\in \ids$, we have
\[(\ma+\mc)\cap \mb \subseteq \ma+(\mc\cap \mb).\]
Let $x\in (\ma+\mc)\cap \mb$. Then $x\in \mb$ and $x=s+t\in \ma+\mc$, for some $s\in \ma$ and $t\in \mc$. Since by hypothesis, $\ma\subseteq \mb$, we have $s\in \mb$. Since $\mb\in \ids,$ we get $-s\in \mb$. Therefore, $t=-s+(s+t)\in \mb$, and that implies $t\in \mc\cap \mb.$ Hence $x=s+t\in \ma+(\mc\cap \mb)$.
\end{proof}

It is shown in \cite[Corollary 2.2]{SA93} that in a commutative semiring with identity, every proper subtractive ideal is contained in a maximal subtractive ideal. The same holds for semisubtractive ideals, and the proof is essentially same to that for the subtractive case, we will only record the result.

\begin{proposition}\label{mxc}
Every proper semisubtractive ideal of a semiring is contained in a maximal semisubtractive ideal.
\end{proposition}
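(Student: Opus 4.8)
The plan is to run the standard Zorn's-lemma argument, the only novelty being the bookkeeping needed to keep semisubtractivity intact along a chain. Fix a proper semisubtractive ideal $\ma$ and consider the collection
\[
\Sigma=\left\{\mb\in\ids \mid \ma\subseteq\mb\neq S\right\},
\]
partially ordered by inclusion. Since $\ma\in\Sigma$, the set $\Sigma$ is nonempty, so it suffices to verify the chain hypothesis of Zorn's lemma and then identify the maximal element produced.

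First I would take a chain $\{\mb_i\}_{i\in I}$ in $\Sigma$ and form $\mb=\bigcup_{i\in I}\mb_i$. Because the family is totally ordered by inclusion, $\mb$ is again an ideal: any two elements lie in a common $\mb_i$, so their sum does as well, and absorption under multiplication by $S$ is inherited from each $\mb_i$. The two properties that must survive the union in order to remain in $\Sigma$ are semisubtractivity and properness. If $x\in\mb\cap\vs$, then $x\in\mb_i$ for some $i$, hence $x\in\mb_i\cap\vs$; since $\mb_i$ is semisubtractive, $-x\in\mb_i\subseteq\mb$, so $\mb\in\ids$. For properness I would use that, since $0\neq 1$, an ideal is proper exactly when it omits $1$: as each $\mb_i$ is proper we have $1\notin\mb_i$ for all $i$, whence $1\notin\mb$ and $\mb\neq S$. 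Thus $\mb\in\Sigma$ is an upper bound for the chain.

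Finally, Zorn's lemma supplies a maximal element $\m$ of $\Sigma$, and I would close by checking that $\m$ is genuinely a maximal semisubtractive ideal of $S$, not merely maximal among those containing $\ma$. Indeed, any proper semisubtractive ideal strictly containing $\m$ would also contain $\ma$, hence lie in $\Sigma$, contradicting the maximality of $\m$ in $\Sigma$; therefore $\m$ is maximal in $\ids\setminus\{S\}$ and contains $\ma$, as required.

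I do not expect any real obstacle here. The argument is structurally the familiar proof for ideals in a ring, and the sole step that invokes the setting of this paper — that an increasing union of semisubtractive ideals is again semisubtractive — is immediate from the definition. This is exactly why the author can remark that the proof is ``essentially the same'' as in the subtractive case.
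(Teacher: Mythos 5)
Your proof is correct and is exactly the argument the paper has in mind: the paper omits the proof, citing that it is essentially the same Zorn's-lemma argument as in the subtractive case (Corollary 2.2 of \cite{SA93}), and your verification that a chain's union stays semisubtractive (witness $-x$ already lies in some member of the chain) and proper (it omits $1$) is precisely the only bookkeeping that argument needs. No gaps.
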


Our aim now is to study the properties of semisubtractive ideal and their products, intersections, and ideal quotients under  semiring homomorphisms. 
Suppose that  $\phi\colon S \to S '$ is a semiring homomorphism.  If $\mb$ is a semisubtractive ideal of $S '$, then  $\mb^c$ denotes the ideal $\phi\inv (\mb).$

\begin{proposition}\label{cep}
	
Let $\phi\colon S \to S '$ be a semiring homomorphism. For semisubtractive ideals $\mb,$ $\mb_1$, and $\mb_2$  of $S '$, the following hold.
	
\begin{enumerate}
		
\item\label{jcki}  $\mb^c$ is a semisubtractive ideal of  $S$.
		
\item The kernel $\mathrm{ker}\phi$ is a semisubtractive ideal of $S .$	
		
\item\label{crj} $ (\mb_1\cap \mb_2)^c= \mb_1^c\cap \mb_2^c$, $ (\mb_1\mb_2)^c\supseteq \mb_1^c\mb_2^c$, $(\mb_1:\mb_2)^c\subseteq (\mb_1^c:\mb_2^c)$.

\item If \( \phi \) is  surjective,
then  \( \phi(\mathfrak{a}) \) of  a semisubtractive ideal \( \mathfrak{a} \) of $S$ is semisubtractive of \( S' \).
\end{enumerate}	
\end{proposition}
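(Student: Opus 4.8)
The plan is to make (\ref{jcki}) the engine: once contractions of semisubtractive ideals are known to be semisubtractive, parts (2) and (3) fall out with routine bookkeeping, and the genuine work is concentrated in (4).

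For (\ref{jcki}), I would first record that $\mb^c=\phi\inv(\mb)$ is an ideal of $S$, since preimages of ideals under semiring homomorphisms are ideals. To verify semisubtractivity, take $x\in\mb^c\cap\vs$ and write $x+(-x)=0$. Applying $\phi$ and using that it preserves the additive structure gives $\phi(x)+\phi(-x)=0$, so $\phi(x)\in\mb\cap\vsp$ and $\phi(-x)=-\phi(x)$. Because $\mb$ is semisubtractive, $-\phi(x)\in\mb$, that is $\phi(-x)\in\mb$, whence $-x\in\phi\inv(\mb)=\mb^c$; and $-x\in\vs$ as well, its additive inverse being $x$. The only subtlety is the observation that a homomorphism carries additive inverses to additive inverses, which is immediate from additivity.

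Parts (2) and (\ref{crj}) I would then deduce quickly. The zero ideal of $S'$ is semisubtractive and $\ker\phi$ is precisely its contraction $0^c$, so (2) is a special case of (\ref{jcki}). For (\ref{crj}), each assertion is a formal consequence of $\phi$ being multiplicative and additive: $\phi\inv$ preserves finite intersections, giving the equality; a product $xy$ with $x\in\mb_1^c$, $y\in\mb_2^c$ satisfies $\phi(xy)=\phi(x)\phi(y)\in\mb_1\mb_2$ while $(\mb_1\mb_2)^c$ is additively closed, giving the second inclusion; and $\phi(x)\mb_2\subseteq\mb_1$ forces $\phi(xy)\in\mb_1$ for every $y\in\mb_2^c$, i.e. $x\mb_2^c\subseteq\mb_1^c$, giving the third. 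Semisubtractivity of the objects involved is supplied by (\ref{jcki}).

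The main obstacle is (4). Surjectivity immediately makes $\phi(\ma)$ an ideal of $S'$, so everything hinges on semisubtractivity. Given $y\in\phi(\ma)\cap\vsp$, I would write $y=\phi(a)$ with $a\in\ma$ and, using surjectivity, lift the additive inverse as $-y=\phi(b)$ for some $b\in S$, so that $a+b\in\ker\phi$. To finish I must produce an element of $\ma$ --- not merely of $S$ --- whose image is $-y$. This is exactly the delicate point, and the place I expect the argument to be most fragile: from $\phi(a)=y\in\vsp$ one cannot conclude $a\in\vs$, so $-a$ need not exist and cannot be used, while the lifted preimage $b$ of $-y$ need not lie in $\ma$. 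I would try to repair $b$ to a representative of its class that lands in $\ma$, exploiting $a+b\in\ker\phi$ together with the semisubtractivity of $\ker\phi$ from (2) and of $\ma$ itself. Before committing to a proof I would stress-test this lifting step against semirings in which $\vsp$ is additively a group but multiplicatively a non-unital ring, so that the negative of an invertible element is neither an additive nor a multiplicative combination of accessible elements; it is precisely in such trivial-extension examples that surjectivity and semisubtractivity must genuinely cooperate, and should they fail to, the step would signal that an extra hypothesis (for instance, control of $\ker\phi$ relative to $\ma$) is required.
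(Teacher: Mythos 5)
Your treatment of (1)--(3) is correct and is essentially what the paper intends (it dismisses those parts as trivial): contraction preserves semisubtractivity, $\mathrm{ker}\phi=0^c$ disposes of (2), and the three formal preimage computations in (3) are exactly right.

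For (4), however, your proposal stops short of a proof, and the obstacle you isolated is not a defect of your bookkeeping --- it is a genuine gap that cannot be closed. The paper's own proof commits precisely the lift you refused to make: from $y\in\phi(\ma)\cap\vsp$ it asserts that there exists $x\in\ma\cap\vs$ with $\phi(x)=y$, which is unjustified, since a preimage of $y$ lying in $\ma$ need not have an additive inverse in $S$. In fact statement (4) is false as stated. Let $S=\mathds{N}[u,v]$, let $S'=\{f\in\mathds{Z}[x]\mid f(0)\geq 0\}$ (a subsemiring of $\mathds{Z}[x]$), and define $\phi\colon S\to S'$ by $u\mapsto x$, $v\mapsto -x$. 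Then $\phi$ is surjective: a monomial $cx^i$ with $c\geq 0$ is $\phi(cu^i)$, while for $c<0$ it is $\phi\left((-c)u^{i-1}v\right)$. Since $\vs=\{0\}$, every ideal of $S$ is vacuously semisubtractive; take $\ma=\langle u\rangle$. By surjectivity, $\phi(\ma)=xS'=\{nx+x^2g(x)\mid n\in\mathds{N},\ g\in\mathds{Z}[x]\}$. Now $x\in\phi(\ma)$ and $x\in\vsp$ (because $-x\in S'$), yet $-x\notin\phi(\ma)$, since every element of $xS'$ has nonnegative linear coefficient. So $\phi(\ma)$ is not semisubtractive, even though $\phi$ is surjective and $\ma$ is semisubtractive.

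Your closing suspicion --- that the lifting step needs an extra hypothesis controlling $\mathrm{ker}\phi$ --- is exactly right, and your sketched repair does succeed under one. If $\mathrm{ker}\phi\subseteq\vs$, then taking $a\in\ma$ with $\phi(a)=y$ and any lift $b$ of $-y$, one gets $a+b\in\mathrm{ker}\phi\subseteq\vs$, hence $a\in\vs$ by Lemma \ref{epvs}(3); semisubtractivity of $\ma$ then gives $-a\in\ma$ and $\phi(-a)=-y\in\phi(\ma)$. The counterexample above, where $\mathrm{ker}\phi$ contains $u^2+uv\notin\vs$, shows that some hypothesis of this kind is unavoidable: no argument from the stated assumptions alone --- the paper's included --- can be correct.
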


\begin{proof}
(1)--(3) Trivial. For (4), observe that  $\phi(\mathfrak{a})$ is an ideal of $S'$ follows from surjectivity of \( \phi\). Let \( y \in \phi(\mathfrak{a}) \cap \mathcal{V}(S') \), so there exists \( x \in \mathfrak{a} \cap V(S) \) such that \( \phi(x) = y \). Since \( \mathfrak{a} \) is semisubtractive, \( -x \in \mathfrak{a} \), and \( \phi(-x) = -y \). Hence, \( -y \in \phi(\mathfrak{a}) \), proving that \( \phi(\mathfrak{a}) \) is semisubtractive.
\end{proof}

We shall now study a closure operation  on the set of all ideals of a semiring $S$ that will assign a unique  semisubtractive ideal to each ideal of $S$.
There are evidences of defining various types of ideals of  semirings using  closure operators (see \cite{JRT22, SA92} for subtractive ideals and \cite{Han15} for $r$-ideals). 
Our choice of closure operator was originally pointed out by Golan in \cite[Example 6.40]{Gol99}.
For a semiring $S $,  the \emph{Golan closure}  (or, \emph{$G$-closure}) operation on $\id$  is defined by
\begin{equation*}
\label{clkdef}
\cz(\ma):=\bigcap\left\{\mb\in \ids\mid \mb\supseteq \ma\right\}.
\end{equation*} 	

In the next proposition we shall establish some properties of $G$-closure operations.

\begin{proposition}\label{lclk}
In the following, $\ma$, $\{\ma_{\lambda}\}_{\lambda \in \Lambda}$, and $\mb$ are ideals of a semiring $S $. 
\begin{enumerate}
		
\item\label{iclk} $\cz(\ma)$ is the smallest semisubtractive ideal containing $\ma$.
		
\item \label{zssi}
$\cz(0)=0.$
		
\item \label{ckr}
$ \cz(S) =S.$
		
\item\label{clcl} $\cz(\cz(\ma))=\cz(\ma).$
		
\item\label{ijcl} If $\ma\subseteq \mb$, then $\cz(\ma)\subseteq \cz(\mb).$
		
\item \label{clu}
$\cz(\langle \ma\cup \mb\rangle )\supseteq \cz(\ma) \cup \cz(\mb).$
		
\item\label{arbin} $\cz\left(\bigcap_{\lambda\in \Lambda}\ma_{\lambda}\right)=\bigcap_{\lambda\in \Lambda} \cz (\ma_{\lambda}).$
		
\item\label{altd} $\ma$  is a semisubtractive ideal if and only if $\ma=\cz(\ma).$
		
\item\label{cabcc} $\cz(\ma+\mb)=\cz(\cz(\ma)+\cz(\mb)).$
		
\item\label{cgsa} $\cz\left(  \ma\right)\subseteq \sqrt{\ma}$.
\end{enumerate}
\end{proposition}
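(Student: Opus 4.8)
The plan is to reduce everything to two facts already recorded in the excerpt: item (\ref{riss}) of Proposition \ref{bpss}, which guarantees that the radical $\sqrt{\ma}$ of any ideal $\ma$ is a semisubtractive ideal, and the characterization in item (\ref{iclk}) of the present proposition, which says that $\cz(\ma)$ is the smallest semisubtractive ideal containing $\ma$ (equivalently, $\cz(\ma)$ is the intersection of the family $\{\mb\in\ids \mid \mb\supseteq\ma\}$, directly from its definition).

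First I would observe the elementary inclusion $\ma\subseteq\sqrt{\ma}$: indeed, any $x\in\ma$ satisfies $x^{1}=x\in\ma$, so $x$ lies in the radical by definition. This is the only genuinely computational point, and it is entirely routine. Combining this inclusion with item (\ref{riss}) shows that $\sqrt{\ma}$ is a semisubtractive ideal \emph{and} contains $\ma$; that is, $\sqrt{\ma}$ is a member of the family $\{\mb\in\ids \mid \mb\supseteq\ma\}$ whose intersection defines $\cz(\ma)$.

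Finally, since $\cz(\ma)$ is contained in every member of that family (being their intersection, or the smallest such ideal by item (\ref{iclk})), applying this to the particular member $\sqrt{\ma}$ yields $\cz(\ma)\subseteq\sqrt{\ma}$, as required.

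There is no substantial obstacle here: the whole argument is a one-line consequence of the fact, already proved, that radicals are semisubtractive. The only thing worth stating carefully is the inclusion $\ma\subseteq\sqrt{\ma}$, which legitimizes treating $\sqrt{\ma}$ as one of the competitors in the Golan closure. I would also remark that the inclusion can be strict, since $\cz(\ma)$ need not be a radical ideal, so no reverse containment should be expected.
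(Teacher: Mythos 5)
Your proposal addresses only item (\ref{cgsa}) of the proposition, but the statement to be proved is the whole of Proposition \ref{lclk}, with ten items. Items (\ref{iclk})--(\ref{cabcc}) are never argued: you explicitly take (\ref{iclk}) as given, and (\ref{zssi})--(\ref{cabcc}) are not mentioned at all. Most of these are admittedly routine (the paper itself dismisses (\ref{ckr})--(\ref{arbin}) as straightforward), but two of them need something said. First, item (\ref{iclk}) is not purely definitional: to know that $\cz(\ma)$ is itself a semisubtractive ideal one must invoke Proposition \ref{bpss}(\ref{ssci}), i.e.\ that an arbitrary intersection of semisubtractive ideals is semisubtractive; since your argument for (\ref{cgsa}) leans on (\ref{iclk}), this step cannot simply be waved through in a self-contained proof. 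Second, item (\ref{cabcc}), $\cz(\ma+\mb)=\cz(\cz(\ma)+\cz(\mb))$, is the one item with genuine content: the paper proves it by a double inclusion, obtaining $\cz(\ma+\mb)\subseteq\cz(\cz(\ma)+\cz(\mb))$ from minimality of the closure, and the reverse inclusion by noting that $\cz(\ma+\mb)$ contains $\ma$ and $\mb$, hence contains $\cz(\ma)+\cz(\mb)$, and then applying (\ref{ijcl}) and (\ref{clcl}). Nothing in your proposal substitutes for this, so as a proof of the proposition as stated it is incomplete.

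The part you did write is correct, and it is essentially the paper's argument for (\ref{cgsa}) in a more direct packaging. The paper cites (\ref{ijcl}), (\ref{altd}), and Proposition \ref{bpss}(\ref{riss}): from $\ma\subseteq\sqrt{\ma}$ and monotonicity, $\cz(\ma)\subseteq\cz(\sqrt{\ma})$, and since $\sqrt{\ma}$ is semisubtractive, $\cz(\sqrt{\ma})=\sqrt{\ma}$. You instead exhibit $\sqrt{\ma}$ as one member of the family $\{\mb\in\ids\mid\mb\supseteq\ma\}$ whose intersection defines $\cz(\ma)$; the two routes are the same observation, and yours is marginally shorter. Your closing remark that the inclusion can be strict is also sensible.
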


\begin{proof}
Claim (\ref{iclk}) follows from Proposition \ref{bpss}(\ref{ssci}) and definition of $\cz$. Since $0$ is a semisubtractive ideal, (\ref{zssi}) follows from the definition of $\cz$. The proofs of
(\ref{ckr})--(\ref{arbin}) are straightforward, and (\ref{altd}) follows from (\ref{iclk}).  To show (\ref{cabcc}),  observe that $\ma+\mb\subseteq \cz(\ma+\mb)$ and  $\ma+\mb\subseteq \cz(\cz(\ma)+\cz(\mb)).$ Since by (\ref{iclk}), $\cz(\ma+\mb)$ is the smallest semisubtractive ideal containing $\ma+\mb$, we must have \[\cz(\ma+\mb)\subseteq \cz(\cz(\ma)+\cz(\mb)).\] For the other inclusion, note that $\cz(\ma+\mb)\supseteq \ma,$ $\mb;$ and hence, by (\ref{clcl}) we have $\cz(\ma+\mb)\supseteq \cz(\ma)+ \cz(\mb).$ Applying (\ref{clcl}) and (\ref{ijcl}) in the last containment, we obtain the desired claim. The proof of (\ref{cgsa}) follows from (\ref{ijcl}), (\ref{altd}), and Proposition \ref{bpss}(\ref{riss}).
\end{proof}

\subsection{$Q$-ideals and $\ids$}\label{qiss}

The notion of a $Q$-ideal in a semiring has been introduced in \cite{All69} in order to build
the quotient structure of a semiring with respect to a $Q$-ideal. An ideal $\ma$ of a semiring $S$ is said to be \emph{$Q$-ideal} if there exists a subset $Q$ of $S$ satisfying the following conditions:
\begin{enumerate}
\item[$\bullet$]
$\{q+\ma\}_{q\in Q}$ is a partition of $S$;

\item[$\bullet$] if $q_1$, $q_2\in Q$ such that $q_1\neq q_2$, then $(q_1+\ma)\cap (q_2+\ma)=\emptyset$.
\end{enumerate}
Let us record the following lemma from \cite[Lemma 7]{All69}, which helps to build the quotient structures with respect to  $q$-ideals.

\begin{lemma}
Let $\ma$ be a $Q$-ideal of a  semiring $S$. If $x\in S$, then there
exists a unique $q\in Q$ such that $x+\ma\subseteq q+\ma$.
\end{lemma}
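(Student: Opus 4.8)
The plan is to read off both existence and uniqueness directly from the two defining conditions of a $Q$-ideal, the only additional input being that $\ma$ is an additive submonoid, so that $0\in\ma$ and $\ma$ is closed under addition.

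For existence, I would first invoke the partition property of $\{q+\ma\}_{q\in Q}$. Since $x\in S$, it lies in at least one class, say $x\in q+\ma$, which means $x=q+a$ for some $a\in\ma$. The subtle point I would emphasize here is that the conclusion we want is containment of the whole coset $x+\ma\subseteq q+\ma$, not merely the membership $x\in q+\ma$. This upgrade is immediate from additive closure: for any $a'\in\ma$ we have $x+a'=q+(a+a')$ with $a+a'\in\ma$, so every element of $x+\ma$ indeed lies in $q+\ma$.

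For uniqueness, I would use the disjointness condition together with $0\in\ma$. Suppose $q_1,q_2\in Q$ both satisfy $x+\ma\subseteq q_i+\ma$. Since $0\in\ma$, we have $x=x+0\in x+\ma$, and hence $x\in (q_1+\ma)\cap(q_2+\ma)$. If $q_1\neq q_2$, the second defining condition forces this intersection to be empty, a contradiction; therefore $q_1=q_2$.

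The argument is essentially bookkeeping built on the two axioms, so I do not expect a genuine obstacle. The one step that warrants a moment's care is the coset containment in the existence part, where one must avoid conflating the membership of the single element $x$ with the inclusion of the entire coset $x+\ma$; that is precisely where the closure of $\ma$ under addition is needed.
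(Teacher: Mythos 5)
Your proof is correct. Note that the paper does not prove this lemma at all---it is recorded as a quotation of Lemma 7 from the cited reference [All69]---so there is no in-paper argument to compare against; your two-step argument (existence from the covering property plus additive closure of $\mathfrak{a}$, uniqueness from $0\in\mathfrak{a}$ plus pairwise disjointness of the cosets) is the standard and essentially the only natural proof, and it correctly supplies the detail the paper leaves to the citation, including the point you rightly flag: upgrading membership $x\in q+\mathfrak{a}$ to the coset inclusion $x+\mathfrak{a}\subseteq q+\mathfrak{a}$ genuinely requires that $\mathfrak{a}$ be closed under addition.
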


Given a $Q$-ideal $\ma$, let us denote $S/\ma:=\{ q+\ma\mid q\in Q\}$. It has been shown in \cite[Theorem 8]{All69} that $S/\ma$ forms a semiring under the following two binary operations:
\begin{enumerate}
\item[$\bullet$]
$(q_1+\ma) \oplus (q_2+\ma) :=q_3+\ma$, where $q_3$ is the unique element in $Q$
such that $q_l+q_2+\ma \subseteq q_3+\ma$; 

\item[$\bullet$] 
$ (q_1+\ma) \odot (q_2+\ma) :=q_3+\ma$, where $q_3$ is the unique element of $Q$
such that $q_lq_2+\ma\subseteq q_3+\ma$.
\end{enumerate}

All the results in the next proposition are lifted from \cite{Ata07}, and these are the ``semisubtractive version'' of them.  We shall only provide proofs where the semisubtractivity property of ideals are involved. The arguments for other parts are identical to the corresponding results  in \cite{Ata07}.

\begin{proposition}
In the following suppose $S$ is a semiring and $\mi$ is a $Q$-ideal of $S$.
\begin{enumerate}
\item\label{qai} Let  $\ma$ be a semisubtractive ideal
of $S$ with $\mi\subseteq \ma$. If $Q$ is closed under additive inverses, then $\ma/\mi:=\{ q+\mi \mid q\in Q\cap \ma\}$ is a semisubtractive ideal of $S/\mi$.

\item\label{iqs} If $\mc$ is a semisubtractive ideal of
$S/\mi$, then $\mc = \mj/\mi$, for some $\mj\in \ids$ .

\item\label{iqss} Let \( \mi \) be a proper \( Q \)-ideal of  \(S \). If \( S/\mi \) is a semifield, then \( \mi \) is a maximal semisubtractive ideal of \( S \).

\item \label{qiji}
If  $\mi$, $\mj$ are
semisubtractive ideals of $S$, then $(\mi + \mj)/\mi$ is a semisubtractive ideal of $S/\mi$. 

\item \label{pqss} Let  $\p$ be a semisubtractive ideal of
$S$ with $\mi\subseteq \p$. Then $\p$ is a prime ideal of $S$ if and only if $\p/\mi$ is a prime
ideal of $S/\mi$.
\end{enumerate}
\end{proposition}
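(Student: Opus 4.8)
The plan is to move primality across the canonical projection $\pi\colon S\to S/\mi$, which sends $x$ to the unique block $q_x+\mi$ satisfying $x+\mi\subseteq q_x+\mi$; by \cite[Theorem 8]{All69} this $\pi$ is a surjective semiring homomorphism, so $\pi(xy)=\pi(x)\odot\pi(y)$ and $\pi(x+y)=\pi(x)\oplus\pi(y)$. I would work with the element-wise form of primality, valid in any commutative semiring with identity: a proper ideal $\q$ is prime if and only if $xy\in\q$ implies $x\in\q$ or $y\in\q$ (apply the ideal definition to $\langle x\rangle\langle y\rangle=\langle xy\rangle$, and conversely test on generators). The one free bridge between $S$ and $S/\mi$ is that $q_x\in\p$ implies $x\in\p$: since $0\in\mi$ we may write $x=q_x+m$ with $m\in\mi\subseteq\p$, so $x\in\p$.

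For the forward implication, assume $\p$ is prime. If $(q_1+\mi)\odot(q_2+\mi)=q_3+\mi$ lies in $\p/\mi$, then $q_3\in Q\cap\p$, so $q_3+\mi\subseteq\p$; since the definition of $\odot$ gives $q_1q_2+\mi\subseteq q_3+\mi$, we get $q_1q_2\in\p$, and primality yields $q_1\in\p$ or $q_2\in\p$, that is, $q_1+\mi\in\p/\mi$ or $q_2+\mi\in\p/\mi$. Properness is immediate, for $\p/\mi=S/\mi$ would force $Q\subseteq\p$ and hence $S=\bigcup_{q\in Q}(q+\mi)\subseteq\p$. Notably this half uses only $\mi\subseteq\p$, not semisubtractivity.

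For the converse, assume $\p/\mi$ is prime (it is an ideal of $S/\mi$ by part (1)). Since $\pi$ is a surjective homomorphism, $\pi\inv(\p/\mi)$ is again a prime ideal of $S$: it is proper because $\p/\mi\neq S/\mi$, and if $xy\in\pi\inv(\p/\mi)$ then $\pi(x)\odot\pi(y)=\pi(xy)\in\p/\mi$, whence $\pi(x)\in\p/\mi$ or $\pi(y)\in\p/\mi$. The bridge of the first paragraph identifies $\pi\inv(\p/\mi)=\{x\in S\mid q_x\in\p\}\subseteq\p$. Thus everything reduces to the reverse inclusion $\p\subseteq\pi\inv(\p/\mi)$, i.e.\ to the saturation property $x\in\p\Rightarrow q_x\in\p$; once this is in hand, $\p=\pi\inv(\p/\mi)$ is prime and we are done.

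I expect this saturation property to be the main obstacle, and the only point at which semisubtractivity can enter. Writing $x=q_x+m$ with $m\in\mi$, the statement $x\in\p\Rightarrow q_x\in\p$ asks us to cancel the summand $m\in\mi$ from an element of $\p$, which is precisely the kind of subtraction a merely semisubtractive ideal need not support: it is available exactly when $m\in\vs$, for then semisubtractivity of $\p$ gives $-m\in\p$ and $q_x=x+(-m)\in\p$. The crux of the converse is therefore to guarantee that the coset differences $m$ occurring here lie in $\vs$, which is where the standing hypotheses --- semisubtractivity of $\p$ together with $Q$ closed under additive inverses, as assumed in part (1) --- must be marshalled; this is the step I would scrutinise most carefully, since without some such saturation a semisubtractive $\p$ that meets a block without containing its representative can fail to be prime while $\p/\mi$ remains prime.
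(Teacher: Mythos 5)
Your proposal covers only item (5) of the five-part proposition, and only half of that item, so as a proof of the stated result it has a genuine gap. The direction you do complete --- $\p$ prime in $S$ implies $\p/\mi$ prime in $S/\mi$, via elementwise primality and disjointness of the blocks $\{q+\mi\}_{q\in Q}$ --- is correct, but it is the direction the paper delegates to Atani's arguments, precisely because it never uses semisubtractivity. The direction you leave open --- $\p/\mi$ prime implies $\p$ prime --- is exactly the ``one half'' the paper proves; your reduction of it to the saturation property $x\in\p\Rightarrow q_x\in\p$ is set up correctly but never discharged. Likewise, the semisubtractive content of items (1)--(4), which the paper does prove (semisubtractivity of $\ma/\mi$ in (1), the construction of $\mj$ in (2), the maximality argument in (3), and the reduction in (4)), is absent from your attempt.

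That said, your diagnosis of where the converse must founder is precisely right, and it is worth recording that the paper's own proof stumbles on the same point. The paper writes $a=q_1+c$, $b=q_2+d$ with $c,d\in\mi$, expands $ab=q_1q_2+q_1d+cq_2+cd\in\p$, and concludes from semisubtractivity that $-q_1d$, $-cq_2$, $-cd\in\p$, and later that $-e\in\p$ where $q+e=q_1q_2+f$. But semisubtractivity of $\p$ only furnishes inverses for elements of $\p\cap\vs$; nothing places $q_1d$, $cq_2$, $cd$, or $e$ (which live in $\mi$) inside $\vs$. This is not a repairable slip: the implication is false as stated. Take $S=\mathds{N}$, $\mi=4\mathds{N}$ with $Q=\{0,1,2,3\}$, and $\p=\mathds{N}\setminus\{1,3\}$. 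Then $\p$ is an ideal containing $\mi$; it is semisubtractive vacuously because $\vs=\{0\}$; and $Q\cap\p=\{0,2\}$, so $\p/\mi=\{0+\mi,\,2+\mi\}$ is a prime ideal of $S/\mi\cong\mathds{Z}_4$. Yet $3\cdot 3=9\in\p$ while $3\notin\p$, so $\p$ is not prime: here $\p$ meets the block $3+\mi$ (e.g.\ $7\in\p$) without containing its representative, which is exactly the failure of saturation you predicted. So the obstruction you flagged is real, and the converse half of (5) requires an additional hypothesis (for instance $\mi\subseteq\vs$, which legitimizes every inverse the paper's argument takes) rather than a cleverer argument.
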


\begin{proof}
To show $\ma/\mi$ is a semisubtractive ideal of $S/\mi$ in (\ref{qai}), suppose \( z + \mi \in \ma/\mi \cap \mathcal{V}(S/\mi) \). This means that there exists an element \( -z + \mi \in S/\mi \) such that
\[
(z + \mi) + (-z + \mi) = \mi.
\]
Since \( z + \ma \in \ma/\mi \), by definition, we know that \( z \in \ma \cap Q \). Since \( A \) is semisubtractive and $Q$ is closed under additive inverses, we have \( -z \in \ma \cap Q \), which implies that \( -z + \ma \in \ma/\mi \).

For (\ref{iqs}), suppose \( q' \) be the unique element in \( Q \) such that \( q' + \mi \) is the zero element in \( S/\mi \). Define
\[
\mj := \{ r \in S : q + \mi \in \mc, \text{ for some } q \in Q \text{ such that } r \in q + \mi \}.
\]
We wish to show  \( \mj \in \ids\)  and  \( \mc = \mj/\mi \). Suppose \( x \in \mj \cap \vs \). By the definition of \( \mj \),  we have \( x + \mi \in \mc \), and since \( \mc \) is a semisubtractive ideal of \( S/\mi \), we must have \( -x + \mi \in \mc \). Hence, \( -x \in \mj \), showing that \( \mj \) is  semisubtractive.

To obtain (\ref{iqss}), Suppose  \( q' \in Q \) is the unique element in \( Q \) such that \( q' + \mi \) is the zero element of \( S/\mi \), and let \( q_0 + \mi \) be the identity element of \( S/\mi \) by \cite[Lemma 2.4]{Ata07}, where \( 1 = q_0 + a \) for some \( a \in \mi \) and the unique element \( q_0 \in Q \).
Let us assume that \( S/\mi \) be a semifield, and suppose \( \mi \subseteq \mj \) for some semisubtractive ideal \( \mj\) of \( S \). We claim that \( \mj = S \).
There are elements \( t \in Q \) and \( c \in \mj \setminus \mi \) with \( c \in t + \mi \), so \( c = t + d \) for some \( d \in \mi \), which implies \( t \in \mj \setminus \mi \).
If \( t + \mi = q' + \mi = \mi \), then \( t = q' \in \mi \), which is a contradiction. Therefore, there exists \( s + \mi \in S/\mi \) such that
\[
(t + \mi) \odot (s + \mi) = q_0 + \mi.
\]
Thus, \( st + e = q_0 + f \) for some \( e, f \in \mi \), and it follows that
\[
st + a + e = q_0 + a + f = 1 + f \in \mj.
\]
Since \( \mj \) is a semisubtractive ideal, $-f\in \mj$, and hence, \( 1 =-f+(1+f)\in \mj \), implying that \( \mj = S \), as required.

To prove  (\ref{qiji}), observe that by \cite[Lemma 2]{GC06}, every $Q$-ideal is subtractive, and hence, by Proposition \ref{bpss}(\ref{sss}) is semisubtractive. Moreover, by Proposition \ref{bpss}(\ref{cuss}), $\mi+\mj$ is semisubtractive, and hence, $(\mi+\mj)/\mi$ is a semisubtractive ideal of $S/\mi$ by (\ref{qai}).

Finally, to show one half of (\ref{pqss}), suppose that \( \p/\mi \) is a prime ideal of $S/\mi$. Let \( a\), \(b \in S \) such that \( ab \in \p \). Then there are elements \( q_1, q_2 \in Q \) such that \( a \in q_1 + \mi \) and \( b \in q_2 + \mi \), so \( a = q_1 + c \) and \( b = q_2 + d \) for some \( c, d \in \mi \).
Since \[ab = q_1 q_2 + q_1 d + c q_2 + cd \in \p \] and \( \p \) is a semisubtractive ideal of \( S\), we must have $-q_1d$, $-cq_2$, $-cd\in \p$, and hence, \( q_1 q_2 \in \p \).
Let \( q \) be the unique element in \( Q \) such that \[ (q_1 + \mi) \odot (q_2 + \mi) = q + \mi, \] where \( q_1 q_2 + \mi \subseteq q + \mi \). Thus, \( q + e = q_1 q_2 + f \) for some \( e, f \in \mi \).
Since \( \p \) is a semisubtractive ideal of \( S \), and $\mi\subseteq \p$, we have $-e\in \p$. Hence, we get \( q \in Q \cap \p \) and \( q + \mi \in \p/\mi \). Hence \( q_1 + \mi \in \p/\mi \) or \( q_2 + \mi \in \p/\mi \), which implies that \( q_1 \in \p \) (so \( a \in \p \)) or \( q_2 \in \p \) (so \( b \in \p \)). Therefore, \( p\in \spec\).
\end{proof}

\subsection{$s$-local semirings}\label{sls}

By Proposition \ref{mxc}, it is guaranteed that maximal semisubtractive ideals exist.  We say a semiring $S$ is \emph{$s$-local} if $S$ has a unique maximal semisubtractive ideal. Recall that a non-zero element $x$ of $S$ is called \emph{semi-unit} if there exists $s$, $t\in S$ such that $1+sx=tx$. All the results in this section are generalizations of  \cite{AA08}. We start with a basic fact for semi-units.

\begin{lemma}\label{sus}
Suppose $\ma$ is a semisubtractive ideal of a semiring $S$. If $x$ is a semi-unit of $S$ and if $x\in \ma$, then $\ma=S$.
\end{lemma}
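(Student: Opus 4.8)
The plan is to exploit the defining relation of a semi-unit together with the fact that $\ma$ is an ideal, and then to invoke semisubtractivity exactly once in order to strip off the auxiliary term. First I would record that, since $x\in\ma$ and $\ma$ is an ideal of $S$, both $sx$ and $tx$ lie in $\ma$, where $s,t\in S$ are witnesses to $x$ being a semi-unit, that is, $1+sx=tx$. In particular the relation $1+sx=tx$ shows that $1+sx$ is already an element of $\ma$.

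The next step is to extract the element $1$ itself. Note that if $\ma$ were merely subtractive the argument would end here: from $sx\in\ma$ and $sx+1=tx\in\ma$ the $k$-ideal property would give $1\in\ma$, hence $\ma=S$ (recall that subtractive ideals are semisubtractive by Proposition \ref{bpss}(\ref{sss})). For a genuinely semisubtractive ideal the same conclusion should instead be reached by cancelling $sx$ through its additive inverse: once we know $sx\in\vs$, the membership $sx\in\ma\cap\vs$ yields $-sx\in\ma$ by semisubtractivity, and then $1=tx+(-sx)=(1+sx)+(-sx)$ lies in $\ma$ because both summands do. Since any ideal containing $1$ is all of $S$, this completes the argument.

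The main obstacle is exactly the step that replaces subtractivity by semisubtractivity. Semisubtractivity only permits the passage from $sx$ to $-sx$ after we have secured that $sx$ genuinely possesses an additive inverse, i.e. that $sx\in\vs$; the bare relation $1+sx=tx$ does not by itself produce such an inverse. The crux of the proof is therefore to exhibit an element of $\ma\cap\vs$ to which semisubtractivity can be applied, for instance by showing that for a suitable choice of the witnesses $s,t$ the product $sx$ lands in $\vs$. I expect this to be the delicate point, and I would try to force it directly from the algebraic constraints imposed by $1+sx=tx$ (together with Lemma \ref{epvs}(3), which transfers invertibility across sums), rather than from the semi-unit condition alone.
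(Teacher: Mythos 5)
The argument you outline is precisely the paper's proof: from $1+sx=tx$ and $x\in\ma$ one gets $sx,\,tx\in\ma$; semisubtractivity is invoked to produce $-sx\in\ma$; and then $1=(1+sx)+(-sx)=tx+(-sx)\in\ma$ forces $\ma=S$. The difference is that the paper performs the middle step without comment --- its proof simply reads ``Since $\ma$ is semisubtractive, we also have $-sx\in \ma$'' --- whereas you correctly observe that this step is legitimate only after one knows $sx\in\vs$, i.e.\ that $-sx$ exists at all. So you have not overlooked an idea that the paper supplies; you have isolated a point that the paper's own proof silently assumes. Nevertheless, since your proposal ends with that key claim unproved (you only say you ``would try to force it'' from the relation $1+sx=tx$ together with Lemma \ref{epvs}(3)), it is not a complete proof: there is a genuine gap exactly where you located it.

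Moreover, the gap cannot be closed: the relation $1+sx=tx$ does not imply $sx\in\vs$, and with the paper's stated definitions the lemma is false without further hypotheses. Take the tropical semiring $B=(\mathds{N}\cup\{-\infty\},\max,+)$, whose zero is $-\infty$ and whose multiplicative identity is the integer $0$. Here $\mathcal{V}(B)=\{-\infty\}$, so every ideal of $B$ is semisubtractive; the set $\ma=\{-\infty\}\cup\{n\in\mathds{N}\mid n\geq 1\}$ is a proper ideal (it misses the identity $0$); and its element $x=1$ is a semi-unit, since with $s=t=0$ the defining relation $1+sx=tx$ reads $\max(0,0+1)=0+1$, which holds. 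Yet $\ma\neq B$. This also explains why Lemma \ref{epvs}(3) cannot rescue the argument: that lemma passes from a sum \emph{already known} to lie in $\vs$ to its summands, and nothing in the hypotheses places any relevant sum in $\vs$ (in the example $\mathcal{V}(B)$ contains only the zero). The statement becomes true only under extra assumptions: for instance if $\ma$ is subtractive, where $sx\in\ma$ and $1+sx=tx\in\ma$ give $1\in\ma$ directly (this is the original lemma of \cite{AA08}), or if the semi-unit witnesses can be chosen so that $sx\in\vs$ --- which is exactly the unstated hypothesis that both your plan and the paper's proof require.
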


\begin{proof}
Since $x$ is a semi-unit, there exists $s$, $t\in S$ such that $1+sx=tx$. Since $x\in \ma$ and $\ma$ is an ideal, we have $tx$, $sx\in \ma$. Since $\ma$ is semisubtractive, we also have $-sx\in \ma$. Therefore, $1=-sx+(1+sx)\in \ma$ implying that $\ma=S$. 
\end{proof}

Next wish to give a characterization of $s$-local semirings in terms of semisubtractive ideals, but first, a lemma.

\begin{lemma}\label{sums}
Let $S$ be a semiring and let $x \in S$. Then $x$ is a semi-unit of $S$ if and
only if $x$ lies outside each maximal semisubtractive ideal of $S$.
\end{lemma}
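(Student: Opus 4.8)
The plan is to prove the two implications separately. The forward direction is an immediate consequence of Lemma \ref{sus}, while the converse rests on a single explicitly constructed semisubtractive ideal together with Proposition \ref{mxc}.

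For the forward direction, I would suppose that $x$ is a semi-unit and let $\m$ be an arbitrary maximal semisubtractive ideal of $S$. If $x\in\m$, then Lemma \ref{sus} forces $\m=S$, contradicting the fact that $\m$, being maximal, is proper. Hence $x\notin\m$, and since $\m$ was arbitrary, $x$ lies outside every maximal semisubtractive ideal.

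For the converse, I would introduce the set
\[
C:=\{a\in S : a+sx=tx \text{ for some } s,t\in S\}.
\]
First I would check that $C$ is an ideal: it contains $0$ (take $s=t=0$), it is closed under addition (add the two defining relations), and it absorbs multiplication by $S$ (multiply a defining relation by $r\in S$). Next comes the crucial point, that $C$ is \emph{semisubtractive}: if $a\in C\cap\vs$ with $a+sx=tx$, then adding $-a$ to both sides yields $sx=-a+tx$, that is, $-a+tx=sx$, which exhibits $-a\in C$. I would also record that $x=1\cdot x\in C$ (take $s=0$, $t=1$), and that the membership $1\in C$ is \emph{precisely} the assertion that $x$ is a semi-unit.

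With $C$ available, I would assume that $x$ lies outside every maximal semisubtractive ideal. If $C$ were proper, then by Proposition \ref{mxc} it would be contained in some maximal semisubtractive ideal $\m$; but then $x\in C\subseteq\m$ would contradict the hypothesis. Hence $C=S$, so $1\in C$, which says exactly that there exist $s,t\in S$ with $1+sx=tx$, i.e.\ $x$ is a semi-unit. (The degenerate case $x=0$ needs no separate argument: then $0$ lies in every ideal, so the hypothesis of the converse is vacuously false.) I expect the only genuine obstacle to be the verification that $C$ is semisubtractive, where the additive inverse of $a$ is used to interchange the roles of the witnesses $s$ and $t$; everything else is routine bookkeeping.
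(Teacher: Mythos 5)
Your proof is correct, and its forward direction is exactly the paper's: Lemma \ref{sus} plus the properness of maximal semisubtractive ideals. The converse, however, takes a genuinely different route. The paper argues by contrapositive through the Golan closure: if $x$ is not a semi-unit, it asserts that $1 \notin \cz(\langle x\rangle)$, so that $\cz(\langle x\rangle)$ is a proper semisubtractive ideal, which by Proposition \ref{mxc} lies in a maximal semisubtractive ideal containing $x$. The assertion $1 \notin \cz(\langle x\rangle)$ is stated there without justification; since $\cz(\langle x\rangle)$ is defined only as an intersection of semisubtractive ideals, it is not formally immediate that $1 \in \cz(\langle x\rangle)$ forces a relation $1 + sx = tx$. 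Your explicit ideal $C := \{a \in S : a + sx = tx \text{ for some } s, t \in S\}$ replaces that step with a concrete object: your verification that $C$ is semisubtractive (adding $-a$ and swapping the roles of the witnesses $s$ and $t$) is the crux and is carried out correctly, $x \in C$, and $1 \in C$ is by definition the semi-unit condition, so $C$ proper would contradict the hypothesis via Proposition \ref{mxc}. In fact your construction yields slightly more: $C$ is a semisubtractive ideal containing $\langle x\rangle$, so by Proposition \ref{lclk}(\ref{iclk}) one has $\cz(\langle x\rangle) \subseteq C$, which is precisely the missing justification for the paper's claim that $1 \notin \cz(\langle x\rangle)$ when $x$ is not a semi-unit. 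So the paper's version is shorter but leans on an unproved property of the Golan closure of a principal ideal, while yours is self-contained and, as a by-product, gives a usable elementwise bound on that closure; your separate treatment of $x = 0$ (needed because semi-units are by definition non-zero) is a detail the paper passes over.
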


\begin{proof}
By Lemma \ref{sus}, $x$ is a semi-unit of $S$ if and only if $S = \cz(\langle x\rangle)$. First, suppose	that $x$ is a semi-unit of $S$ and let $x \in \m$ for some maximal semisubtractive ideal $\m$ of $S$.
Then we have $\langle x\rangle \subseteq \m \subset S$, so that $x$ can not be a semi-unit of $S$.
Conversely, if $x$ is not a semi-unit of $S$, then $1 + rx = sx$ holds for no $r,$ $s \in S$.
Hence, $1 \notin \cz(\langle x\rangle)$ yields that $\cz(\langle x\rangle)$ is a proper semisubtractive ideal of $S$ by Proposition \ref{lclk}(\ref{iclk}). By
Proposition \ref{mxc}, $\cz(\langle x\rangle)\subseteq \mj$ for some maximal semisubtractive ideal $\mj$ of $S$; but this would
contradict  that $x$ lies outside each maximal semisubtractive ideal of $S$.
\end{proof}

\begin{theorem}\label{nsu}
A semiring $S$ is $s$-local if and only if the set
of non-semi-unit elements of $S$ forms a semisubtractive ideal.
\end{theorem}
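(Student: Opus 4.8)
The plan is to prove both directions of the biconditional in Theorem~\ref{nsu}, using Lemma~\ref{sums} as the central tool, which tells us that the set of non-semi-units is exactly the intersection of all maximal semisubtractive ideals. Let me write $N$ for the set of non-semi-unit elements of $S$. By Lemma~\ref{sums}, an element $x$ fails to be a semi-unit precisely when it lies in \emph{some} maximal semisubtractive ideal, so that
\[
N=\bigcup\left\{\m\mid \m\text{ is a maximal semisubtractive ideal of }S\right\}.
\]

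First I would prove the easy direction ($\Rightarrow$). Suppose $S$ is $s$-local with unique maximal semisubtractive ideal $\m$. Then the union above collapses to a single set, giving $N=\m$, which is by hypothesis a semisubtractive ideal. This direction is immediate once the reformulation of Lemma~\ref{sums} is in hand.

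The converse ($\Leftarrow$) is where the real work lies, and it is the step I expect to be the main obstacle. Assume $N$ is a semisubtractive ideal; I must show it is the unique maximal semisubtractive ideal. First I would argue $N$ is proper: since $1$ is a semi-unit (indeed $1+0\cdot 1=1\cdot 1$, using $0\ne 1$), we have $1\notin N$, so $N\neq S$. By Proposition~\ref{mxc}, $N$ is contained in some maximal semisubtractive ideal $\m$. Conversely, every maximal semisubtractive ideal $\m'$ is proper, hence by Lemma~\ref{sums} each of its elements is a non-semi-unit, giving $\m'\subseteq N$; maximality of $\m'$ together with properness of $N$ then forces $\m'=N$. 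Thus $N$ is itself a maximal semisubtractive ideal and every maximal semisubtractive ideal equals $N$, establishing uniqueness and hence $s$-locality.

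The subtle point to get right is the containment $\m'\subseteq N$ for an arbitrary maximal semisubtractive ideal $\m'$: this relies on the fact that a proper semisubtractive ideal can contain no semi-unit, which is exactly the content of Lemma~\ref{sus} (or equivalently one direction of Lemma~\ref{sums}). Once that containment is secured, combining it with the hypothesis that $N$ is a proper semisubtractive ideal makes the maximality argument routine. I would take care to invoke $0\neq 1$ when verifying that $1$ is a semi-unit, since this is what guarantees $N$ is proper and is an assumption the paper explicitly records.
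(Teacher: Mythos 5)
Your proof is correct and follows essentially the same route as the paper: the forward direction reads off $N=\m$ from Lemma~\ref{sums}, and the converse uses properness of $N$ (since $1$ is a semi-unit), existence of a maximal semisubtractive ideal via Proposition~\ref{mxc}, the containment $\m'\subseteq N$ from Lemma~\ref{sums}/\ref{sus}, and maximality to conclude $\m'=N$. Your version is in fact slightly more careful than the paper's (explicitly verifying that $1$ is a semi-unit and noting that every maximal semisubtractive ideal, not just one, equals $N$), but the underlying argument is identical.
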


\begin{proof}
Suppose $S$ is an $s$-local semiring with unique maximal semisubtractive ideal $\m$. By Lemma
\ref{sums}, $\m$ is precisely the set of non-semi-unit elements of $S$. For the converse, let us assume that
the set of non-semi-units of $S$ is a semisubtractive ideal $\ma$ of $S$. Then  $\m\neq S$ since $1$ is a semi-unit of
$S$. Since $S$ is not trivial, by Proposition \ref{mxc}, $S$ has a maximal semisubtractive ideal, say $\mj$. By
Lemma \ref{sums}, $\mj$ consists of non-semi-units of $S$, and so $\mj\subseteq \m\subsetneq S$ . Thus $\m = \mj$. This proves the uniqeness.
\end{proof}

Let $S$ be a semiring, and let $X$ be the set of all multiplicatively cancellable
elements of $S$. Following the construction of localization of rings, we can obtain a semiring $S_X$, the localization of $S$ at $X$. If $\ma$ is an ideal of $S$, we shall denote its extension in $S_X$ by $\ma S_X$. 
The next lemma tells preservation of semisubtractive ideals under localizations, and its proof is trivial.

\begin{lemma}\label{psl}
If $\ma$ is a semisubtractive ideal of a semiring $S$, then so is $\ma S_X$  of $S_X$.
\end{lemma}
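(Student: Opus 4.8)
The plan is to unwind the definition of semisubtractivity for the extension $\ma S_X$ and reduce everything back to the corresponding statement in $S$. Recall that the localization $S_X$ consists of formal fractions $s/u$ with $s \in S$ and $u \in X$, subject to the usual identification, and that the extended ideal $\ma S_X$ is precisely $\{a/u \mid a \in \ma,\ u \in X\}$ (using that $X$ is multiplicatively closed, any finite $S_X$-combination of such fractions can be brought to a common denominator and so has this form). First I would take an arbitrary element $z = a/u \in \ma S_X \cap \mathcal{V}(S_X)$ with $a \in \ma$ and $u \in X$, and the goal is to produce $-z \in \ma S_X$.

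The key step is to relate invertibility of $z$ in $S_X$ back to invertibility of a suitable numerator in $S$. Since $z \in \mathcal{V}(S_X)$, there is a fraction $w = b/v$ with $z + w = 0$ in $S_X$; clearing denominators via the equivalence relation defining $S_X$ gives an identity in $S$ of the form $(av + bu)\,c = 0$ for some cancellable $c \in X$, whence $av + bu = 0$ in $S$ (here I use that $c$ is multiplicatively cancellable). This exhibits $av \in \mathcal{V}(S)$ with additive inverse $bu$; and $av \in \ma$ because $a \in \ma$ and $\ma$ is an ideal. Now semisubtractivity of $\ma$ in $S$ applies: since $av \in \ma \cap \mathcal{V}(S)$, we get $-(av) = bu \in \ma$. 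Consequently $-z = b/v = (bu)/(uv) \in \ma S_X$, since $bu \in \ma$ and $uv \in X$. This closes the verification that $\ma S_X$ is semisubtractive.

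I would also remark at the outset that $\ma S_X$ is genuinely an ideal of $S_X$ (standard, and presumably already implicit in the localization construction recalled just before the lemma), so only the semisubtractive condition needs checking. The one point deserving care — and the only plausible obstacle — is the passage from the fractional equation $z + w = 0$ to a genuine additive-inverse relation in $S$. In a semiring the equivalence relation on fractions is defined through multiplication by a cancellable element, so one must be careful to extract a true equation in $S$ and to invoke cancellability of $c \in X$ to strip it off; everything else is routine bookkeeping with fractions. Since the authors themselves describe the proof as trivial, I expect this denominator-clearing argument to go through without difficulty once the fraction representation of $\ma S_X$ is made explicit.
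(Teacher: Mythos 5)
Your proof is correct, and it fills in exactly the verification the paper leaves out (the author merely declares the lemma's proof trivial): reduce an element of $\ma S_X \cap \mathcal{V}(S_X)$ to a fraction $a/u$ with $a \in \ma$, clear denominators using cancellability of elements of $X$ to get a genuine equation $av + bu = 0$ in $S$, apply semisubtractivity of $\ma$ to $av$, and rewrite the inverse as a fraction with numerator in $\ma$. Your attention to the denominator-clearing step — the only place where the semiring setting could cause trouble — is exactly right, so there is nothing to add.
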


\begin{proposition}
Let $S$ be an $s$-local semiring with unique maximal semisubtractive ideal $\m$ such that $S \cap \m = \emptyset$. Then $S_X$ is an $s$-local semiring with unique maximal semisubtractive ideal $\m S_X$.
\end{proposition}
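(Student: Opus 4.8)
The plan is to identify $\m S_X$ with the set of non-semi-unit elements of $S_X$ and then invoke Theorem~\ref{nsu} together with Lemma~\ref{sums}. By Lemma~\ref{psl}, $\m S_X$ is already semisubtractive, so once I show it coincides with the set of non-semi-units of $S_X$, Theorem~\ref{nsu} yields that $S_X$ is $s$-local, and Lemma~\ref{sums} identifies $\m S_X$ as its unique maximal semisubtractive ideal. The computational backbone is an elementary description of the extended ideal: since $\m$ is an ideal and every element of $X$ is cancellable, putting a finite sum $\sum (m_i/1)(t_i/u_i)$ over a common denominator shows $\m S_X=\{\,m/u\mid m\in\m,\ u\in X\,\}$, and hence $s/x\in\m S_X$ if and only if $su\in\m$ for some $u\in X$.

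First I would record that $\m S_X$ is proper, which is where the disjointness hypothesis $X\cap\m=\emptyset$ enters. If $\m S_X=S_X$, then $1/1\in\m S_X$, and the criterion above produces $u\in X$ with $u=1\cdot u\in\m$, contradicting $X\cap\m=\emptyset$; thus $\m S_X\neq S_X$. With properness in hand, the inclusion of $\m S_X$ in the non-semi-units follows cleanly from Lemma~\ref{sus} applied to $S_X$: were some element of $\m S_X$ a semi-unit, then $\m S_X$ would be a semisubtractive ideal containing a semi-unit, forcing $\m S_X=S_X$ and contradicting properness. Hence every element of $\m S_X$ is a non-semi-unit.

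For the reverse inclusion I would show that each $s/x\notin\m S_X$ is a semi-unit of $S_X$. Taking $u=1$ in the criterion, $s/x\notin\m S_X$ forces $s\notin\m$; since $S$ is $s$-local with unique maximal semisubtractive ideal $\m$, Lemma~\ref{sums} makes $s$ a semi-unit of $S$, so $1+as=bs$ for some $a,b\in S$. Transporting this equation to $S_X$ and using that $x/1$ is a unit (with inverse $1/x$), a short fraction computation gives $1+(ax/1)(s/x)=(bx/1)(s/x)$, exhibiting $s/x$ as a semi-unit of $S_X$. Combining the two inclusions shows that the set of non-semi-units of $S_X$ is precisely $\m S_X$, and the conclusion follows from Theorem~\ref{nsu} and Lemma~\ref{sums}.

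The step I expect to be the main obstacle is proving that every element of $\m S_X$ is a non-semi-unit. The naive route of writing an arbitrary element of $\m S_X$ as $m/u$, assuming it is a semi-unit, clearing denominators, and then cancelling a term lying in $\m$ stalls, because $\m$ is only known to be semisubtractive---not subtractive, nor \emph{a priori} prime---so the required subtraction or factorization need not be available. The device that bypasses this is the Lemma~\ref{sus} argument above, which substitutes the properness of $\m S_X$ for the missing subtractivity (or primality) of $\m$, so that no structural assumption on $\m$ beyond semisubtractivity is needed.
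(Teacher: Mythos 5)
Your proposal is correct, and its skeleton matches the paper's: both reduce the claim, via Lemma~\ref{psl} and Theorem~\ref{nsu}, to showing that $\m S_X$ is exactly the set of non-semi-units of $S_X$ (and both read the hypothesis ``$S\cap\m=\emptyset$'' as the intended $X\cap\m=\emptyset$). Your treatment of the direction ``$s/x\notin\m S_X$ implies semi-unit'' --- pass to $s\notin\m$, invoke Lemma~\ref{sums} in $S$, then transport $1+as=bs$ to $S_X$ by multiplying through by the cancellable denominator --- is essentially the paper's argument verbatim. Where you genuinely diverge is the other direction. The paper argues computationally: it writes a semi-unit as $b/t$, clears denominators in the semi-unit equation to get $t^2uw+bctw=tubd$ in $S$, and then, assuming $b\in\m$, cancels the term $bctw$ using semisubtractivity of $\m$ to conclude $t^2uw\in\m$, a contradiction. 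That cancellation tacitly requires $bctw\in\vs$ (an additive inverse $-bctw$ must exist), which is never justified; moreover, the paper's final step infers $b/t\notin\m S_X$ from $b\notin\m$ alone, skipping the saturation check that your explicit criterion ($s/x\in\m S_X$ iff $su\in\m$ for some $u\in X$) makes visible, and the contradiction it reaches is really with the disjointness hypothesis (since $t^2uw\in X$) rather than with Lemma~\ref{sums} as cited. Your route --- properness of $\m S_X$ from $X\cap\m=\emptyset$, then Lemma~\ref{sus} applied inside $S_X$ to the semisubtractive ideal $\m S_X$ --- sidesteps all of this, exactly as your final paragraph anticipates: it trades the paper's element-level subtraction (which semisubtractivity cannot in general supply) for an ideal-level argument needing nothing beyond Lemma~\ref{psl}. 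So your proof is not merely a valid alternative; on this half it is tighter than the paper's own.
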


\begin{proof}
	To obtain the desired claim, thanks to  Lemma \ref{psl} and Theorem \ref{nsu}, it is sufficient to show that $\m S_X$ is exactly the set of non-semi-units of $S_X$. Suppose $z \in S_X \setminus \m S_X$. Then  $z = \frac{a}{s}$ for some $a \in S$ and $s \in X$. We must have $a \notin \m$, so $1 + xa = ya$, for some $x,$ $y \in S$ by Lemma \ref{sums}. It then follows from the equation
	\[
	\frac{1}{1} + \left(\frac{xs}{1}\right)\left(\frac{a}{s}\right) = \left(\frac{ys}{1}\right)\left(\frac{a}{s}\right)
	\]
	that $\frac{a}{s}$ is a semi-unit of $S_X$.
	On the other hand, if $y$ is a semi-unit of $S_X$, then  $y = \frac{b}{t}$, for some $b \in S$ and $t \in X$, and there exist $c,$ $d \in S$ and $u, w \in X$ such that
	\[
	\frac{1}{1} + \left(\frac{b}{t}\right)\left(\frac{c}{u}\right) = \left(\frac{b}{t}\right)\left(\frac{d}{w}\right).
	\]
	It follows that
	\[
	t^2uw + bctw = tubc.
	\]
	From this, we claim that $b \notin \m$. Otherwise, since $\m$ is semisubtractive, we would get \[t^2uw=(t^2uw + bctw)+(-bctw)\in \m,\] a contradiction by Lemma \ref{sums}. Therefore, it follows that $y \notin \m S_X$, and this completes the proof.
\end{proof}

\subsection{Subclasses of $\ids$}\label{siid}
We shall now introduce two subclasses of semisubtractive ideals and study some of their properties. Let $S $ be a semiring.
A semisubtractive ideal $\ma$ of $S $ is called \emph{$s$-irreducible} if  $\mb\cap \mb'= \ma$ implies that either $\mb= \ma$ or $\mb'= \ma$ for all $\mb,$ $\mb'\in \ids$.
A semisubtractive ideal $\ma$ of $S $ is called \emph{$s$-strongly irreducible} if  $\mb\cap \mb'\subseteq \ma$ implies that either $\mb\subseteq \ma$ or $\mb'\subseteq \ma$ for all $\mb,$ $\mb'\in \ids$. 

Our next result tells that whenever we want to study semisubtractive ideals that are also irreducible (or strongly irreducible), it is sufficient to study $s$-irreducible (or $s$-strongly irreducible) ideals. 

\begin{theorem}\label{eqsi}
An ideal $\mc$ of a semiring $S$ is $s$-irreducible ($s$-strongly irreducible)  if and only if $\mc$ is   irreducible (strongly irreducible)  as well as a semisubtractive ideal of  $S $.
\end{theorem}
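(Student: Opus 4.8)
The plan is to prove both biconditionals (the $s$-irreducible case and the $s$-strongly irreducible case) in parallel, treating each as two implications. The whole argument will rest on the Golan closure $\cz$ and three facts from Proposition \ref{lclk}: that $\cz(\mb)$ is the smallest semisubtractive ideal containing $\mb$ (item \ref{iclk}), that $\cz$ commutes with intersections (item \ref{arbin}), and that an ideal is semisubtractive exactly when it equals its own closure (item \ref{altd}).

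I expect the reverse implications to be essentially free. If $\mc$ is irreducible and semisubtractive, then the irreducibility implication holds for every pair of ideals of $S$, hence a fortiori for every pair in $\ids$; combined with semisubtractivity this is precisely the definition of $s$-irreducible. The strongly irreducible case is the same remark with $\subseteq$ in place of $=$. So the content lives entirely in the forward implications, where one must upgrade the quantifier from semisubtractive ideals to arbitrary ideals.

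For the forward direction I would argue as follows. Assume $\mc$ is $s$-irreducible; it is then semisubtractive, and I must check irreducibility against arbitrary (not necessarily semisubtractive) ideals $\mb,$ $\mb'$ with $\mb\cap\mb'=\mc$. Applying $\cz$, using that it commutes with the (binary) intersection and that $\cz(\mc)=\mc$, I get
\[
\cz(\mb)\cap\cz(\mb')=\cz(\mb\cap\mb')=\cz(\mc)=\mc.
\]
Since $\cz(\mb)$ and $\cz(\mb')$ are semisubtractive, $s$-irreducibility yields $\cz(\mb)=\mc$ or $\cz(\mb')=\mc$; say the former. Then $\mb\subseteq\cz(\mb)=\mc$ while $\mc=\mb\cap\mb'\subseteq\mb$, forcing $\mb=\mc$. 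For $s$-strongly irreducible I would repeat this verbatim with inclusions: from $\mb\cap\mb'\subseteq\mc$ and monotonicity of $\cz$ (item \ref{ijcl}) I obtain $\cz(\mb)\cap\cz(\mb')\subseteq\mc$, conclude $\cz(\mb)\subseteq\mc$ or $\cz(\mb')\subseteq\mc$, and finish via $\mb\subseteq\cz(\mb)$.

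The one step carrying real weight is the passage to Golan closures in the forward direction: it replaces the arbitrary ideals $\mb,$ $\mb'$ by semisubtractive ones without altering the relationship of their intersection to $\mc$, which is exactly what lets the $s$-hypothesis bite. I do not foresee a genuine obstacle — the only care needed is to invoke the intersection-commutation property for a two-fold intersection, a special case of the arbitrary-intersection statement in Proposition \ref{lclk}(\ref{arbin}).
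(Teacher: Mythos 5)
Your proposal is correct and follows essentially the same route as the paper: both forward directions hinge on applying the Golan closure $\cz$ to the arbitrary ideals, using Proposition \ref{lclk}(\ref{arbin}) to commute $\cz$ with the intersection, $\cz(\mc)=\mc$ from \ref{lclk}(\ref{altd}), and $\mb\subseteq\cz(\mb)$ from \ref{lclk}(\ref{iclk}), with the converse directions being immediate. The only cosmetic difference is that the paper writes out the strongly irreducible case and calls the other a ``trivial change of terminology,'' whereas you write out the irreducible case — and in doing so you supply the small extra step ($\mc=\mb\cap\mb'\subseteq\mb$, forcing $\mb=\mc$) that the equality version actually requires.
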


\begin{proof}
We give a proof for $s$-strongly irreducible ideals, that for $s$-irreducible ideals requiring only a
trivial change of terminology.
Suppose that $\mc$ is a $s$-strongly irreducible ideal and $\ma$, $\mb$ are ideals of $S $ such that $\ma\cap \mb\subseteq \mc.$ This implies
\[ \cz(\ma)\cap \cz(\mb)=\cz(\ma\cap \mb)\subseteq \cz(\mc)=\mc,\]
where, the first equality follows from Proposition \ref{lclk}(\ref{arbin}) and the inclusion from Proposition \ref{lclk}(\ref{ijcl}). By hypothesis, this implies that either $ \cz(\ma)\subseteq \mc$ or $ \cz(\mb)\subseteq \mc$. Since by Proposition \ref{lclk}(\ref{iclk}), $\ma\subseteq \cz(\ma)$ and $\mb\subseteq \cz(\mb)$ for all $\ma$, $\mb\in \id,$ we have the desired claim. The  converse claim is obvious.
\end{proof}

Likewise (commutative) rings, it is known (see \cite[Proposition 7.33]{Gol99}) that a strongly irreducible ideal $\mc$ of a (commutative) semiring $S$ has the following equivalent `elementwise' property: If $x$, $y \in S$ satisfy $\langle x\rangle \cap \langle y\rangle \subseteq \mc$, then either $x\in \mc$ or $y\in \mc$. The next proposition shows that a similar result holds for $s$-strongly irreducible ideals.

\begin{proposition}
\label{abi} 
An ideal $\mc$ of a semiring $S $ is  $s$-strongly irreducible  if and only if	for all $x$, $y\in S$ satisfy $\cz(\langle x\rangle)  \bigcap \cz(\langle y\rangle) \subseteq  \mc$ implies either $x\in \mc$ or $y\in \mc$.
\end{proposition}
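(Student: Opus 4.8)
The plan is to prove the equivalence by establishing both implications, leveraging the fact (just proved in Theorem \ref{eqsi}) that an $s$-strongly irreducible ideal is precisely a semisubtractive ideal that is strongly irreducible, together with the characterization of $\cz$ as the smallest semisubtractive ideal containing a given ideal (Proposition \ref{lclk}(\ref{iclk})).

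For the forward direction, suppose $\mc$ is $s$-strongly irreducible, and take $x$, $y\in S$ with $\cz(\langle x\rangle)\cap \cz(\langle y\rangle)\subseteq \mc$. Since $\cz(\langle x\rangle)$ and $\cz(\langle y\rangle)$ are semisubtractive ideals by construction, applying the defining property of $s$-strong irreducibility directly yields $\cz(\langle x\rangle)\subseteq \mc$ or $\cz(\langle y\rangle)\subseteq \mc$. As $x\in \langle x\rangle \subseteq \cz(\langle x\rangle)$ (and similarly for $y$), this gives $x\in \mc$ or $y\in \mc$, as required. This direction is essentially immediate once the two principal $G$-closures are recognized as legitimate inputs to the $s$-strong irreducibility hypothesis.

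For the converse, assume the elementwise condition and let $\mb$, $\mb'\in \ids$ with $\mb\cap \mb'\subseteq \mc$. I would argue by contradiction: suppose neither $\mb\subseteq \mc$ nor $\mb'\subseteq \mc$, so there exist $x\in \mb\setminus \mc$ and $y\in \mb'\setminus \mc$. The key step is to show $\cz(\langle x\rangle)\cap \cz(\langle y\rangle)\subseteq \mc$, which by hypothesis forces $x\in \mc$ or $y\in \mc$, contradicting the choice of $x$ and $y$. To obtain this inclusion, note that $\langle x\rangle \subseteq \mb$ and, since $\mb$ is semisubtractive, Proposition \ref{lclk}(\ref{altd}) and (\ref{ijcl}) give $\cz(\langle x\rangle)\subseteq \cz(\mb)=\mb$; similarly $\cz(\langle y\rangle)\subseteq \mb'$. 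Hence $\cz(\langle x\rangle)\cap \cz(\langle y\rangle)\subseteq \mb\cap \mb'\subseteq \mc$, completing the contradiction.

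The main obstacle, such as it is, lies in the converse direction, specifically in recognizing that the semisubtractivity of $\mb$ and $\mb'$ is exactly what allows one to pull the $G$-closures of the principal ideals $\langle x\rangle$, $\langle y\rangle$ back inside $\mb$, $\mb'$ respectively; without the identity $\cz(\mb)=\mb$ for semisubtractive $\mb$ (Proposition \ref{lclk}(\ref{altd})), the chain of inclusions would not close up. Everything else is a routine application of monotonicity of $\cz$ and the elementary containment $x\in \langle x\rangle\subseteq \cz(\langle x\rangle)$. I would also remark that this result is the semisubtractive analogue of \cite[Proposition 7.33]{Gol99}, with $G$-closures of principal ideals playing the role that principal ideals themselves play in the classical strongly irreducible case.
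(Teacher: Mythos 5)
Your proof is correct where the paper's own argument is correct, and it follows the same skeleton (reduce everything to properties of the Golan closure from Proposition \ref{lclk}), but the execution differs in two places, both to your advantage. In the forward direction the paper starts from $\langle x\rangle\cap\langle y\rangle\subseteq\mc$ and invokes the identity $\cz(\langle x\rangle\cap\langle y\rangle)=\cz(\langle x\rangle)\cap\cz(\langle y\rangle)$ of Proposition \ref{lclk}(\ref{arbin}) to conclude $\cz(\langle x\rangle)\cap\cz(\langle y\rangle)\subseteq\cz(\mc)=\mc$; note that the inclusion it actually needs, $\cz(\ma)\cap\cz(\mb)\subseteq\cz(\ma\cap\mb)$, is the \emph{non-automatic} direction for a closure operator (only $\cz(\ma\cap\mb)\subseteq\cz(\ma)\cap\cz(\mb)$ follows from monotonicity), so the paper's forward direction leans on a delicate step. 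You bypass this entirely: since the stated hypothesis already hands you $\cz(\langle x\rangle)\cap\cz(\langle y\rangle)\subseteq\mc$ and the two closures are themselves members of $\ids$, feeding them directly into the definition of $s$-strong irreducibility is both shorter and logically safer. In the converse, the paper merely asserts that one can find $x\in\ma\setminus\mc$, $y\in\mb\setminus\mc$ with $\cz(\langle x\rangle)\cap\cz(\langle y\rangle)\subseteq\mc$; your chain $\langle x\rangle\subseteq\mb$, hence $\cz(\langle x\rangle)\subseteq\cz(\mb)=\mb$ by Proposition \ref{lclk}(\ref{ijcl}) and (\ref{altd}), is precisely the justification the paper omits, and your diagnosis that semisubtractivity of $\mb$, $\mb'$ is the essential input is correct.

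One genuine caveat, which you share with the statement itself but which the paper's proof at least makes explicit: by definition, an $s$-strongly irreducible ideal is in particular a \emph{semisubtractive} ideal, so in the converse you must either assume $\mc\in\ids$ or derive semisubtractivity of $\mc$ from the elementwise condition. Your argument establishes only the irreducibility-type property for semisubtractive $\mb$, $\mb'$; it never shows that $x\in\mc\cap\vs$ forces $-x\in\mc$, and the elementwise hypothesis does not yield this (for $x\in\mc\cap\vs$ one has $\cz(\langle x\rangle)=\cz(\langle -x\rangle)$, but there is no reason this closure, or its self-intersection, should lie inside a non-semisubtractive $\mc$, so the hypothesis gives no purchase). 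The paper handles this by phrasing its converse as ``suppose that $\mc$ is a semisubtractive ideal which is not $s$-strongly irreducible,'' i.e., by tacitly reading the proposition as a statement about semisubtractive ideals; you should insert the same standing hypothesis at the start of your converse, after which your argument is complete.
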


\begin{proof}
Suppose that $\mc$ is a $s$-strongly irreducible ideal of $S$ and $\langle x\rangle  \cap \langle y\rangle \subseteq  \mc$ for all $x,$ $y\in S .$ This implies
\[\cz(\langle x\rangle \cap \langle y\rangle)=\cz(\langle x\rangle)\cap  \cz(\langle y\rangle) \subseteq \cz (\mc)=\mc\]
where the first equality follows from Proposition \ref{lclk}(\ref{arbin}).
Since $\mc$ is  $s$-strongly irreducible, we must have either $x\in \langle x\rangle\subseteq  \cz(\langle x\rangle) \subseteq \mc$ or  $y\in \langle y\rangle\subseteq  \cz(\langle y\rangle) \subseteq \mc$.

To show the converse, suppose that $\mc$ is a semisubtractive ideal which is not $s$-strongly irreducible. Then there are semisubtractive ideal $\ma$ and $\mb$ satisfy $\ma\cap \mb\subseteq \mc,$ but $\ma\nsubseteq \mc$ and $\mb\nsubseteq \mc.$ This implies there exist $x\in \ma$ and $y\in \mb$ such that $\cz(\langle x\rangle)  \cap \cz(\langle y\rangle) \subseteq  \mc$, but neither $x\notin \mc$ nor $y\in \mc$, a contradiction.
\end{proof}

The rest of the results in this section are extensions of the corresponding results  on strongly irreducible ideals in rings from  \cite{Azi08}.
We shall now see that every proper semisubtractive ideal can be expressed  in terms of $s$-irreducible ideals. But first, a lemma.

\begin{lemma}\label{lir}
Suppose $S$ is a semiring. Let $0\neq x\in S$ and $\mc$	be a proper  semisubtractive ideal of  $S$ such that $x\notin \mc.$ Then there exists a $s$-irreducible ideal $\mb$ of $S $ such that $\mc\subseteq \mb$
and $x\notin \mb$.	
\end{lemma}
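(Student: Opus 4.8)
The plan is to use a Zorn's lemma argument to produce a semisubtractive ideal that is maximal with respect to excluding the fixed element $x$, and then show that any such maximal excluder is automatically $s$-irreducible. First I would set up the poset
\[
\Sigma := \{\mb \in \ids \mid \mc \subseteq \mb \text{ and } x \notin \mb\},
\]
ordered by inclusion. This is nonempty because $\mc \in \Sigma$ by hypothesis. To apply Zorn's lemma, I would verify that every chain in $\Sigma$ has an upper bound: given a chain $\{\mb_i\}$, its union $\bigcup_i \mb_i$ is an ideal (a union of a chain of ideals), it still contains $\mc$ and still avoids $x$, and crucially it is semisubtractive. The semisubtractivity of the union should follow directly: if $z \in (\bigcup_i \mb_i) \cap \vs$, then $z \in \mb_i$ for some $i$, whence $-z \in \mb_i \subseteq \bigcup_i \mb_i$ since each $\mb_i$ is semisubtractive. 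Thus $\Sigma$ has a maximal element, call it $\mb$.

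Next I would argue that this maximal $\mb$ is $s$-irreducible. Suppose for contradiction that $\mb = \mb_1 \cap \mb_2$ for semisubtractive ideals $\mb_1, \mb_2 \in \ids$ with $\mb_1 \neq \mb$ and $\mb_2 \neq \mb$. Since $\mb \subseteq \mb_1$ and $\mb \subseteq \mb_2$ with both inclusions proper, the maximality of $\mb$ in $\Sigma$ is violated by each of $\mb_1$ and $\mb_2$: each properly contains $\mb \supseteq \mc$, so each must fail the defining condition of $\Sigma$, which forces $x \in \mb_1$ and $x \in \mb_2$. But then $x \in \mb_1 \cap \mb_2 = \mb$, contradicting $x \notin \mb$. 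Hence no such proper decomposition exists, so $\mb$ is $s$-irreducible by definition.

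I would double-check the logical form of the $s$-irreducibility definition to make sure the argument matches: an ideal $\ma$ is $s$-irreducible if $\mb' \cap \mb'' = \ma$ with $\mb', \mb'' \in \ids$ forces $\mb' = \ma$ or $\mb'' = \ma$. The contradiction above produces exactly a decomposition with both factors strictly larger than $\mb$, which is precisely what must be ruled out, so the matching is clean. One subtlety worth flagging is the hypothesis $x \neq 0$ in the statement; it is needed so that the excluding condition $x \notin \mb$ is nonvacuous and compatible with $0 \subseteq \mc \subseteq \mb$ (every semisubtractive ideal contains $0$), ensuring $\Sigma$ genuinely consists of proper-enough ideals rather than collapsing.

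The main obstacle I anticipate is not the Zorn's lemma scaffolding, which is routine, but verifying carefully that the class of semisubtractive ideals is closed under unions of chains — this is what makes the whole poset live inside $\ids$ rather than merely inside $\id$. Fortunately this closure is immediate from the elementwise definition, since membership of any single element $z$ and its inverse $-z$ is witnessed at a single stage of the chain. Everything else is a standard maximal-element-avoiding-a-point argument, and the passage from maximality to irreducibility is the same mechanism familiar from the theory of irreducible ideals in commutative rings.
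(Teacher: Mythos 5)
Your proof is correct and follows essentially the same route as the paper: Zorn's lemma applied to the poset of semisubtractive ideals containing $\mc$ and avoiding $x$, followed by the observation that maximality forces any decomposition $\mb = \mb_1 \cap \mb_2$ with both factors strictly larger to put $x$ in both factors, a contradiction. In fact your write-up is more careful than the paper's, since you explicitly verify that the union of a chain of semisubtractive ideals is semisubtractive, a step the paper leaves implicit.
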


\begin{proof}
Let
$\{\mb_{\lambda}\}_{\lambda \in \Lambda}$ be a chain of semisubtractive ideal of $S$ such that $x\notin \mb_{\lambda}\supseteq \mc$ for all $\lambda \in \Lambda$. Then
\[x\notin \bigcup_{\lambda \in \Lambda} \mb_{\lambda}\supseteq \mc.
\]
By Zorn's lemma, there exists a maximal element $\mb$ of this chain. Suppose that  $\mb=\mb_1\cap \mb_2.$ By the maximality condition of $\mb$, we must have $x\in \mb_1$ and $x\in \mb_2,$ and hence $x\in \mb_1\cap \mb_2=\mb,$ a contradiction. Therefore, $\mb$ is the required $s$-irreducible ideal.
\end{proof}

\begin{proposition}
If $\mc$ is a proper subtractive ideal of a semiring $S$, then $\mc=\bigcap_{\mc\subseteq \mb} \mb$, where $\mb$ is a $s$-irreducible ideal of  $S$.
\end{proposition}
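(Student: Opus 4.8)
The plan is to prove the decomposition $\mc = \bigcap_{\mc \subseteq \mb} \mb$, where the intersection runs over all $s$-irreducible ideals $\mb$ containing $\mc$, by establishing the two inclusions separately. The inclusion $\mc \subseteq \bigcap_{\mc \subseteq \mb} \mb$ is immediate and requires no work: every ideal $\mb$ in the indexing family contains $\mc$ by definition, so $\mc$ is contained in their intersection.

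For the reverse inclusion $\bigcap_{\mc \subseteq \mb} \mb \subseteq \mc$, I would argue by contrapositive using Lemma \ref{lir}. Suppose there exists an element $x$ with $x \in \bigcap_{\mc \subseteq \mb} \mb$ but $x \notin \mc$. Since $\mc$ is proper and semisubtractive (the statement says ``subtractive,'' but in context this should read \emph{semisubtractive}, as Lemma \ref{lir} and the whole section concern semisubtractive ideals), and since $x \notin \mc$, Lemma \ref{lir} yields an $s$-irreducible ideal $\mb_0$ with $\mc \subseteq \mb_0$ and $x \notin \mb_0$. But $\mb_0$ is one of the ideals appearing in the intersection, so $x \in \mb_0$, contradicting $x \notin \mb_0$. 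Hence no such $x$ exists, giving $\bigcap_{\mc \subseteq \mb} \mb \subseteq \mc$.

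One technical point to handle carefully is the element $x = 0$: Lemma \ref{lir} is stated for nonzero $x$, so in the contrapositive I must ensure the witnessing element is nonzero. This is not an obstacle, because $0 \in \mc$ always (the zero ideal is contained in every semisubtractive ideal), so any $x \notin \mc$ is automatically nonzero, and Lemma \ref{lir} applies directly. The main (and really only) substantive ingredient is Lemma \ref{lir}; once it is in hand, the proposition is a routine contrapositive argument, so I expect no genuine obstacle beyond correctly invoking that lemma and confirming the nonzero hypothesis is met.
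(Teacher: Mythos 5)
Your proof is correct and follows essentially the same route as the paper's: the trivial inclusion $\mc \subseteq \bigcap_{\mc\subseteq\mb}\mb$ plus a contradiction argument applying Lemma \ref{lir} to an element $x$ in the intersection but outside $\mc$. Your explicit handling of the nonzero hypothesis (any $x \notin \mc$ is nonzero since $0 \in \mc$) and your reading of ``subtractive'' as a typo for \emph{semisubtractive} are both right, and in fact slightly more careful than the paper's own write-up.
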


\begin{proof}
By Lemma \ref{lir}, there exists a $s$-irreducible ideal $\mb$ of  $S $ such that $\mc\subseteq \mb$. Suppose that  \[\mb'=\bigcap_{\mc\subseteq \mb} \{\mb\mid \mb\;\text{is $s$-irreducible}\}.\]
Then $\mc\subseteq \mb'$. We claim that $\mb'=\mc.$   If $\mb'\neq \mc$, then there exists an $x\in \mb'\setminus \mc$, and by Lemma \ref{lir}, there exists a $s$-irreducible ideal $\mb''$ such that $x\notin \mb''\supseteq \mc,$ a contradiction.
\end{proof}

For a weakly Noetherian semiring,  we even have a stronger result.

\begin{proposition}
Every semisubtractive ideal of a weakly Noetherian semiring $S$ can be represented as an intersection of a finite number of $s$-irreducible ideals of $S$.
\end{proposition}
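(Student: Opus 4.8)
The plan is to prove that every semisubtractive ideal of a weakly Noetherian semiring can be written as a finite intersection of $s$-irreducible ideals, mimicking the classical Laskerian-style argument that every ideal in a Noetherian ring is a finite intersection of irreducible ideals. The natural device is a Noetherian-induction (or maximal counterexample) argument carried out \emph{inside} the lattice $(\ids,\subseteq)$, which is legitimate because the weakly Noetherian hypothesis should guarantee the ascending chain condition on semisubtractive ideals. First I would let $\Sigma$ denote the set of those semisubtractive ideals of $S$ that cannot be expressed as a finite intersection of $s$-irreducible ideals, and aim for a contradiction by assuming $\Sigma\neq\emptyset$.

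The key steps, in order, are as follows. Using the ascending chain condition, I would extract a maximal element $\ma$ of $\Sigma$: if $\Sigma$ had no maximal element one could build a strictly ascending chain of semisubtractive ideals, contradicting weak Noetherianity. By definition $\ma$ is not $s$-irreducible, so there exist semisubtractive ideals $\mb,\mb'\in\ids$ with $\mb\cap\mb'=\ma$ but $\mb\neq\ma$ and $\mb'\neq\ma$; since $\ma\subseteq\mb$ and $\ma\subseteq\mb'$ always hold, this forces the strict inclusions $\ma\subsetneq\mb$ and $\ma\subsetneq\mb'$. By maximality of $\ma$ in $\Sigma$, neither $\mb$ nor $\mb'$ lies in $\Sigma$, so each is a finite intersection of $s$-irreducible ideals, say $\mb=\bigcap_{i=1}^{m}\mi_i$ and $\mb'=\bigcap_{j=1}^{n}\mj_j$. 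Then
\[
\ma=\mb\cap\mb'=\Bigl(\bigcap_{i=1}^{m}\mi_i\Bigr)\cap\Bigl(\bigcap_{j=1}^{n}\mj_j\Bigr)
\]
exhibits $\ma$ itself as a finite intersection of $s$-irreducible ideals, contradicting $\ma\in\Sigma$. Hence $\Sigma=\emptyset$ and the result follows.

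I expect the main obstacle to be the very first step: pinning down exactly what ``weakly Noetherian'' means and confirming that it delivers the ascending chain condition on the sublattice $\ids$ (rather than merely on all ideals, or on finitely generated ones). The argument above uses ACC on semisubtractive ideals in two essential places — to produce the maximal counterexample and implicitly to terminate the process — so the proof hinges on that hypothesis being the right one. A secondary point worth a line is that the $s$-irreducibility used here is the intrinsic notion (decompositions within $\ids$), which by Theorem \ref{eqsi} coincides with being simultaneously irreducible and semisubtractive, so no compatibility issue arises between the two readings of ``$s$-irreducible.'' Everything else is formal: the failure of $s$-irreducibility is unwound directly from its definition, and the closure of $\ids$ under finite (indeed arbitrary) intersection from Proposition \ref{bpss}(\ref{ssci}) guarantees that the displayed intersection is again a semisubtractive ideal, so the decomposition lives in the correct class.
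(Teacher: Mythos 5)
Your proposal is correct and is essentially the same argument as the paper's: both take the set of semisubtractive ideals admitting no finite $s$-irreducible decomposition, extract a maximal element via the weakly Noetherian hypothesis, split it as a proper intersection using the failure of $s$-irreducibility, and derive a contradiction from maximality. Your added justification of the strict inclusions $\ma\subsetneq\mb$, $\ma\subsetneq\mb'$ is a detail the paper leaves implicit, but it does not change the route.
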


\begin{proof}
Suppose 
\[\mathcal{F}:=\left\{\ma\in  \ids \mid \ma\neq \bigcap_{i=1}^n \mc_i,\;\mc_i \;\text{is $s$-irreducible}\right\}.\]  It is sufficient to show that $\mathcal{F}=\emptyset$.
Since $S$ is weakly Noetherian, $\mathcal{F}$ has a maximal element $\mc$. Since $\mc\in \mathcal{F}$, it is not a finite
intersection of $s$-irreducible ideals of $R $. This implies that $\mc$ is not $s$-irreducible. Hence, there are semisubtractive ideal $\ms$ and $\mt$ such that  $\ms\supsetneq \mc$, $\mt\supsetneq \mc$, and $\mc = \ms \cap \mt.$ Since $\mc$ is
a maximal element of $\mathcal{F}$, we must have $\ms,$ $\mt \notin \mathcal{F}.$ Therefore, $\ms$ and $\mt$ are a finite intersection of
$s$-irreducible ideals of $S$, which subsequently implies that $\mc $ is also a finite intersection of $s$-irreducible
ideals of $S$, a contradiction.
\end{proof}

In Proposition \ref{mxc}, we have seen that every proper semisubtractive ideal is contained in a maximal ideal of this type. The next result shows a similar containment, but with a minimal $s$-strongly irreducible ideal.

\begin{proposition}
Every proper semisubtractive ideal of a semiring is contained in a minimal $s$-strongly irreducible ideal.
\end{proposition}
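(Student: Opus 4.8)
The plan is to produce such a minimal ideal by a Zorn's lemma argument applied to the family of all $s$-strongly irreducible ideals containing the given $\mc$, ordered by reverse inclusion. First I would observe that this family is nonempty: since $\cz(S)=S$ the whole semiring $S$ is semisubtractive, and the inclusion $\mb\cap\mb'\subseteq S$ holds vacuously, so $S$ is itself an $s$-strongly irreducible ideal containing $\mc$. Writing $\Sigma$ for the collection of $s$-strongly irreducible ideals of $S$ that contain $\mc$, a minimal element of $(\Sigma,\subseteq)$ is exactly what the statement demands.

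The crux of the argument---and the step I expect to be the main obstacle---is to check that $\Sigma$ is closed under intersections of chains, so that Zorn's lemma can be invoked on $(\Sigma,\supseteq)$. So let $\{\ma_{\lambda}\}_{\lambda\in\Lambda}$ be a chain in $\Sigma$ (totally ordered by $\subseteq$) and set $\ma:=\bigcap_{\lambda\in\Lambda}\ma_{\lambda}$. That $\ma$ is semisubtractive and contains $\mc$ is immediate from Proposition~\ref{bpss}(\ref{ssci}). To verify $s$-strong irreducibility I would use the elementwise criterion of Proposition~\ref{abi}: assume $x,y\in S$ satisfy $\cz(\langle x\rangle)\cap\cz(\langle y\rangle)\subseteq\ma$. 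Then the same inclusion holds inside every $\ma_{\lambda}$, and since each $\ma_{\lambda}$ is $s$-strongly irreducible, Proposition~\ref{abi} gives $x\in\ma_{\lambda}$ or $y\in\ma_{\lambda}$ for each $\lambda$.

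It remains to upgrade this \emph{pointwise} dichotomy to a \emph{uniform} one, that is, to show $x\in\ma_{\lambda}$ for all $\lambda$ or $y\in\ma_{\lambda}$ for all $\lambda$; this is precisely where the total ordering of the chain is used. Suppose for contradiction that there are indices $\lambda_1$ with $x\notin\ma_{\lambda_1}$ and $\lambda_2$ with $y\notin\ma_{\lambda_2}$; the pointwise dichotomy then forces $y\in\ma_{\lambda_1}$ and $x\in\ma_{\lambda_2}$. Comparing $\ma_{\lambda_1}$ and $\ma_{\lambda_2}$ along the chain, if $\ma_{\lambda_1}\subseteq\ma_{\lambda_2}$ then $y\in\ma_{\lambda_1}\subseteq\ma_{\lambda_2}$ contradicts $y\notin\ma_{\lambda_2}$, while if $\ma_{\lambda_2}\subseteq\ma_{\lambda_1}$ then $x\in\ma_{\lambda_2}\subseteq\ma_{\lambda_1}$ contradicts $x\notin\ma_{\lambda_1}$. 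Hence one of the two uniform alternatives must hold, giving $x\in\ma$ or $y\in\ma$, so $\ma\in\Sigma$ by Proposition~\ref{abi}.

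With closure under chain intersections established, every chain in $(\Sigma,\supseteq)$ has an upper bound (its intersection) and $\Sigma\neq\emptyset$, so Zorn's lemma yields a maximal element of $(\Sigma,\supseteq)$, equivalently a minimal element $\ma$ of $(\Sigma,\subseteq)$. This $\ma$ is a minimal $s$-strongly irreducible ideal containing $\mc$, which completes the proof. The only delicate point is the chain-intersection lemma above; the remainder is the routine Zorn's lemma packaging.
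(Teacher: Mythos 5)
Your proof is correct under the paper's stated definitions and follows the same overall strategy as the paper---Zorn's lemma applied to the family $\Sigma$ of $s$-strongly irreducible ideals containing $\mc$, ordered by reverse inclusion---but the two write-ups distribute their effort in opposite places. The paper spends its effort on nonemptiness: since $\mc$ is proper it lies in a maximal ideal $\m$ of $S$, which is prime, hence semisubtractive by Proposition~\ref{bpss}(\ref{pss}) and strongly irreducible, hence $s$-strongly irreducible by Theorem~\ref{eqsi}; it then invokes Zorn's lemma without verifying that chains in $(\Sigma,\supseteq)$ have upper bounds. You do the mirror image: your chain verification---passing to the elementwise criterion of Proposition~\ref{abi} and upgrading the pointwise dichotomy to a uniform one using the total order---is correct and fills what is genuinely missing from the paper's own proof (note that Proposition~\ref{abi} is a convenience, not a necessity: the same uniformization argument works directly from the definition applied to $\mb\cap\mb'\subseteq\bigcap_{\lambda}\ma_{\lambda}$). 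The fragile point is your nonemptiness step: you place $S$ in $\Sigma$ because the defining implication holds trivially for $\ma=S$. This is legitimate under the paper's literal definition, which does not require $s$-strongly irreducible ideals to be proper, but it conflicts with the convention in the literature this section builds on (Golan, Azizi, Heinzer--Ratliff--Rush), where strongly irreducible ideals are proper; moreover, as far as your argument shows, the minimal element produced by Zorn's lemma could be $S$ itself, which would make the conclusion vacuous. The paper's maximal-ideal argument costs one line more and cures both defects at once: it exhibits a \emph{proper} member of $\Sigma$, which both survives the properness convention and forces every minimal element of $\Sigma$ to be proper. The strongest version of the proof is therefore your chain verification combined with the paper's nonemptiness argument.
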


\begin{proof}
Let $\ma$ be a proper semisubtractive ideal of a semiring $S$. Consider the chain $(\mathcal{E}, \subseteq)$, where \[\mathcal{E}:=\{\mb\mid \ma\subseteq \mb,\;\mb\;\text{is $s$-strongly irreducible}\}.\]
We claim that $\mathcal{E}\neq \emptyset$. Indeed:  every proper semisubtractive ideal is contained in a maximal  ideal $\m$, which is prime, and hence, a strongly irreducible ideal. However, by Proposition \ref{bpss}(\ref{pss}), $\m$ is also semisubtractive. By Theorem \ref{eqsi}, $\m$ is $s$-strongly irreducible. Hence $\m\in \mathcal{E}$. Therefore, the set $\mathcal{E}$ is nonempty. By Zorn's lemma, $\mathcal{E}$ has a minimal element, which is our desired minimal $s$-strongly irreducible ideal.
\end{proof}

The next result characterizes semirings in which $s$-irreducibility and $s$-strongly irreducibility coincide.

\begin{theorem}
In a arithmetic semiring $S $, a semisubtractive ideal of a semiring $S$ is
$s$-irreducible if and only if it is $s$-strongly irreducible. Conversely, 
if a $s$-irreducible ideal of a semiring $S $ is $s$-strongly
irreducible, then $S$ is arithmetic.
\end{theorem}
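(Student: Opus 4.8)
The plan is to read both notions off the complete modular lattice $(\ids,\subseteq)$ and argue entirely inside it. Recall from Proposition \ref{bpss}(\ref{ssci}) and (\ref{cuss}) that $\ids$ is closed under arbitrary intersections and sums, so its meet is intersection and its join is the sum; thus an ideal is $s$-irreducible exactly when it is meet-irreducible in $\ids$, and $s$-strongly irreducible exactly when it is \emph{meet-prime} (i.e. $\mb\cap\mb'\subseteq\mc$ forces $\mb\subseteq\mc$ or $\mb'\subseteq\mc$). I would first dispose of the implication that holds unconditionally: if $\mc$ is $s$-strongly irreducible and $\mb\cap\mb'=\mc$ with $\mb,\mb'\in\ids$, then $\mb\cap\mb'\subseteq\mc$ gives $\mb\subseteq\mc$ or $\mb'\subseteq\mc$, while $\mc\subseteq\mb$ and $\mc\subseteq\mb'$ are automatic, whence $\mb=\mc$ or $\mb'=\mc$. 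So $s$-strongly irreducible always implies $s$-irreducible, and throughout I take ``$S$ is arithmetic'' to mean that $\ids$ is a distributive lattice.

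For the forward direction I would assume $S$ arithmetic and let $\mc$ be $s$-irreducible. Given $\ma,\mb\in\ids$ with $\ma\cap\mb\subseteq\mc$, I invoke distributivity in the form $\mc+(\ma\cap\mb)=(\mc+\ma)\cap(\mc+\mb)$. Since $\ma\cap\mb\subseteq\mc$ the left side is just $\mc$, so $\mc=(\mc+\ma)\cap(\mc+\mb)$, an intersection of two members of $\ids$ (sums of semisubtractive ideals being semisubtractive). Meet-irreducibility of $\mc$ then forces $\mc+\ma=\mc$ or $\mc+\mb=\mc$, i.e. $\ma\subseteq\mc$ or $\mb\subseteq\mc$, so $\mc$ is $s$-strongly irreducible.

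For the converse I would assume every $s$-irreducible ideal is $s$-strongly irreducible and show $\ids$ is distributive, for which it suffices to prove $\ma\cap(\mb+\mc)\subseteq(\ma\cap\mb)+(\ma\cap\mc)$ for all $\ma,\mb,\mc\in\ids$ (the reverse inclusion is automatic). The key tool is the representation result established above (via Lemma \ref{lir}): every proper semisubtractive ideal is the intersection of the $s$-irreducible ideals containing it. Setting $\mj:=(\ma\cap\mb)+(\ma\cap\mc)\in\ids$, the case $\mj=S$ is trivial, so I assume $\mj$ proper and let $\ms$ be any $s$-irreducible ideal with $\mj\subseteq\ms$. By hypothesis $\ms$ is meet-prime, and from $\ma\cap\mb\subseteq\ms$, $\ma\cap\mc\subseteq\ms$ I split into cases: if $\ma\subseteq\ms$ then $\ma\cap(\mb+\mc)\subseteq\ms$; otherwise meet-primeness gives $\mb\subseteq\ms$ and $\mc\subseteq\ms$, so $\mb+\mc\subseteq\ms$ and again $\ma\cap(\mb+\mc)\subseteq\ms$. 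Hence $\ma\cap(\mb+\mc)$ lies in every $s$-irreducible ideal containing $\mj$, and intersecting them yields $\ma\cap(\mb+\mc)\subseteq\mj$, as required.

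The main obstacle is this converse, precisely the passage from ``every meet-irreducible is meet-prime'' to genuine distributivity: this is exactly where the representation theorem is indispensable, since it lets me verify the desired inclusion by testing against meet-prime ideals one at a time. The remaining care is routine but must not be skipped: confirming that sums and intersections keep the computation inside $\ids$, handling the degenerate case $\mj=S$, and keeping straight that $s$-irreducible and $s$-strongly irreducible correspond to meet-irreducible and meet-prime in $\ids$ rather than to the raw ideal-theoretic notions.
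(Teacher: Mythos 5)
Your forward direction is correct, and it takes a genuinely different route from the paper: the paper's entire proof is a two-line reduction to Is\'{e}ki's theorems on irreducible and strongly irreducible ideals via Theorem \ref{eqsi}, whereas you argue self-containedly inside the lattice of semisubtractive ideals. Moreover, your forward argument survives under the standard meaning of \emph{arithmetic} (distributivity of the lattice $\id$ of \emph{all} ideals, which is Is\'{e}ki's notion and the one the paper's citation depends on): $\ids$ is a sublattice of $\id$, since it is closed under $\cap$ and $+$ by Proposition \ref{bpss}, and a sublattice of a distributive lattice is distributive; your computation $\mc=(\mc+\ma)\cap(\mc+\mb)$ together with meet-irreducibility then finishes it.

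The converse, however, has a genuine gap, located exactly where you wrote ``throughout I take `$S$ is arithmetic' to mean that $\ids$ is a distributive lattice.'' That is not an interpretation of the statement but a weakening of it. Your argument --- testing $\ma\cap(\mb+\mc)\subseteq(\ma\cap\mb)+(\ma\cap\mc)$ against the $s$-irreducible ideals containing the right-hand side, via the representation result from Lemma \ref{lir} --- is internally sound, but it only ever manipulates semisubtractive ideals, so the most it can deliver is distributivity of the sublattice $\ids$. Arithmeticity demands distributivity of all of $\id$, including ideals that are not semisubtractive, and distributivity of a sublattice does not propagate to the ambient lattice; neither your hypothesis nor your proof ever touches a non-semisubtractive ideal, so nothing in the argument can control them. (For what it is worth, the paper's own converse --- Is\'{e}ki's Theorem 7, which turns ``every irreducible ideal is strongly irreducible'' into arithmeticity, combined with Theorem \ref{eqsi} --- also needs a bridge from the $s$-hypothesis, which concerns only semisubtractive irreducible ideals, to a statement about arbitrary irreducible ideals; but that bridge is precisely what any complete proof must supply, and your write-up replaces it with a redefinition instead.) As written, your converse proves a strictly weaker statement than the theorem asserts.
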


\begin{proof}
By \cite[Theorem 3]{Is{e}56}, in an arithmetic semiring, irreducible and strongly irreducible ideals are equivalent. Thanks to Proposition \ref{eqsi}, we have then our claim. For the converse, \cite[Theorem 7]{Is{e}56} says that if irreducibility implies strongly irreducibility, then the semiring is arithmetic. Once again, applying Proposition \ref{eqsi} on this result, we obtain the claim.
\end{proof}

\begin{corollary}
In an arithmetic semiring, every semisubtractive ideal is the intersection of the $s$-strongly irreducible ideals that contain it.
\end{corollary}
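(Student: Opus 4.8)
The plan is to deduce the corollary directly from the two results immediately preceding it, essentially by relabelling, with no new computation. The key observation is that in an arithmetic semiring the class of $s$-irreducible ideals and the class of $s$-strongly irreducible ideals coincide, so any representation of a semisubtractive ideal as an intersection of $s$-irreducible ideals is automatically a representation in terms of $s$-strongly irreducible ideals.

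First I would dispose of the boundary case $\mc=S$. The only ideal containing $S$ is $S$ itself, and $S$ is $s$-strongly irreducible (the defining implication holds vacuously), so the family of $s$-strongly irreducible ideals containing $S$ is exactly $\{S\}$, whose intersection is $S$. Hence the assertion holds trivially in this case.

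Now let $\mc$ be a proper semisubtractive ideal. By the representation proposition above (every proper semisubtractive ideal equals the intersection of the $s$-irreducible ideals that contain it, proved via Lemma \ref{lir}), we have
\[
\mc=\bigcap\left\{\mb\mid \mc\subseteq \mb,\ \mb\ \text{is $s$-irreducible}\right\}.
\]
By the preceding theorem, in an arithmetic semiring a semisubtractive ideal is $s$-irreducible if and only if it is $s$-strongly irreducible. Since, by definition, every $s$-irreducible ideal and every $s$-strongly irreducible ideal is in particular semisubtractive, the two collections
\[
\left\{\mb\mid \mc\subseteq \mb,\ \mb\ \text{is $s$-irreducible}\right\}
\quad\text{and}\quad
\left\{\mb\mid \mc\subseteq \mb,\ \mb\ \text{is $s$-strongly irreducible}\right\}
\]
are literally the same family of ideals. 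Substituting this identity into the displayed equality yields $\mc=\bigcap\{\mb\mid \mc\subseteq\mb,\ \mb\ \text{is $s$-strongly irreducible}\}$, which is exactly the claim.

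Since the argument is a relabelling, there is no serious obstacle; the only points demanding care are the trivial case $\mc=S$ (handled above) and the verification that the two families genuinely coincide, which rests on the observation that both classes of ideals are semisubtractive, so the arithmetic equivalence theorem applies termwise. One should also read the representation proposition's hypothesis ``proper subtractive ideal'' as a typographical slip for ``proper semisubtractive ideal,'' which is what its proof via Lemma \ref{lir} actually establishes.
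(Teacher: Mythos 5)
Your proposal is correct and is exactly the intended argument: the paper states this corollary without proof precisely because it follows by combining the preceding representation proposition (intersection of $s$-irreducible ideals) with the theorem that $s$-irreducible and $s$-strongly irreducible ideals coincide in arithmetic semirings. Your extra care with the improper case $\mc=S$ and with the paper's typographical slip (``proper subtractive ideal'' for ``proper semisubtractive ideal'') is sound and only strengthens the write-up.
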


\subsection{Semisubtractive spaces}\label{sssp}

From Proposition \ref{lclk}, it is clear that the sets of the form: \[\h(\ma):=\{ \mb\in \ids\mid \mb\supseteq \ma\}\qquad (\ma\in \ids)\] are not closed under finite unions, and hence, $\h$ is not a Kuratowski closure operator. However, the collection  $\{\h(\ma)\}_{\ma\in \ids}$ of subsets of $\ids$ as subbasic closed sets induce a topology on $\ids$,  which we call a \emph{semisubtractive topology},  denote by $\tau_s$. The set $\ids$ endowed with a semisubtractive topology, will be called a \emph{semisubtractive space}, and instead of $(\ids, \tau_s)$, we shall simply denote the space by $\ids$. Note that our semisubtractive topology is a slightly  modified version of the ideal topology (also known as the coarse lower topology) introduced in \cite{DG23}.

We now study some separation properties of semisubtractive spaces. Since the specialization order of a semisubtractive space forms a partial order, we derive the following separation axiom.

\begin{proposition}\label{t0}
Every semisubtractive space is $T_{{\scriptscriptstyle 0}}.$
\end{proposition}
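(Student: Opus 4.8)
The plan is to show that a semisubtractive space $\ids$ is $T_0$ by establishing the standard criterion: for any two distinct points $\ma,\mb\in\ids$, there is a closed set (or equivalently an open set) containing one but not the other. Since the proposition's preamble already asserts that the specialization order is a partial order, the cleanest route is to recall why this suffices. In general, a topological space is $T_0$ precisely when its specialization preorder is antisymmetric, i.e. is a genuine partial order. Thus the real content is to identify the specialization order on $\ids$ explicitly and verify antisymmetry.

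First I would compute the specialization order. For a point $\mb\in\ids$, its closure $\cl(\{\mb\})$ is the smallest closed set containing $\mb$. Since the closed sets are generated by the subbasic family $\{\h(\ma)\}_{\ma\in\ids}$, and $\mb\in\h(\ma)$ holds exactly when $\mb\supseteq\ma$, one checks that $\mb'$ lies in the closure of $\{\mb\}$ if and only if every subbasic closed set $\h(\ma)$ containing $\mb$ also contains $\mb'$. Taking $\ma=\mb$ (which is legitimate since $\mb\in\ids$ and $\mb\in\h(\mb)$ because $\mb\supseteq\mb$), the smallest such subbasic set already forces $\mb'\supseteq\mb$. Conversely, if $\mb'\supseteq\mb$ then whenever $\mb\in\h(\ma)$ we have $\mb\supseteq\ma$, hence $\mb'\supseteq\mb\supseteq\ma$, so $\mb'\in\h(\ma)$; thus $\mb'$ lies in every subbasic closed set containing $\mb$, and therefore in the closure. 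This shows the specialization order is exactly reverse inclusion: $\mb'\in\cl(\{\mb\})\iff \mb\subseteq\mb'$.

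With the order identified, antisymmetry is immediate, and this is where the two-point separation is made concrete. Given distinct $\ma,\mb\in\ids$ with $\ma\neq\mb$, either $\ma\nsubseteq\mb$ or $\mb\nsubseteq\ma$ (otherwise $\ma=\mb$). Say $\mb\nsubseteq\ma$. Then by the computation above, $\ma\notin\cl(\{\mb\})$, since $\ma\in\cl(\{\mb\})$ would force $\mb\subseteq\ma$. Hence the open set $\ids\setminus\cl(\{\mb\})$ contains $\ma$ but not $\mb$, separating the two points. The symmetric case is handled identically.

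I do not anticipate a serious obstacle here; the proposition is essentially the translation of a lattice-theoretic fact (inclusion of ideals is a partial order) into topological language. The one point that requires a little care is the closure computation for a subbasically generated topology, since closed sets are finite unions of finite-intersection-free combinations of the subbasic sets; but because $\h(\mb)$ itself is already a subbasic closed set containing $\mb$, the smallest closed set containing $\mb$ is pinned down without needing to analyse arbitrary unions and intersections. Thus the mildest subtlety is simply verifying that $\h(\mb)$ is available as a witness, which it is, by definition, for every $\mb\in\ids$.
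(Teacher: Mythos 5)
Your proposal is correct and follows essentially the same route as the paper, which justifies the proposition precisely by observing that the specialization order of a semisubtractive space is a partial order; you simply supply the details the paper leaves implicit. Your computation that $\cl(\{\mb\})=\h(\mb)$, i.e.\ that specialization is reverse inclusion, is moreover exactly the content of the paper's Lemma \ref{scir} ($\h(\ma)=\overline{\{\ma\}}$), proved there for sobriety.
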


Next, we aim to look into the sobriety of semisubtractive spaces. Recall that a nonempty closed subset $D$ of a topological space is \emph{irreducible} if $D\neq D_{ 1}\cup D_{ 2}$ for any two proper closed subsets  $D_{ 1}$ and $D_{2}$ of $D$. A point $x$ in a closed subset $D$ is called a \emph{generic point} of $D$ if $D = \overline{\{x\}},$ and a topological space is called \emph{sober} if every nonempty irreducible closed set has a unique generic point.  To obtain the sobriety of a semisubtractive space, we use the following lemma.

\begin{lemma}\label{scir}
Every nonempty subbasic closed set of a semisubtractive space is irreducible.
\end{lemma}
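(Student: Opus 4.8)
The plan is to show that each subbasic closed set $\h(\ma)$, for $\ma \in \ids$, is irreducible in the sense defined just above, namely that it cannot be written as a union of two proper closed subsets of itself. Recall that $\h(\ma) = \{\mb \in \ids \mid \mb \supseteq \ma\}$. The first observation I would make is that $\h(\ma)$ is always nonempty, since $\ma$ itself is a semisubtractive ideal containing $\ma$, so $\ma \in \h(\ma)$. In fact $\ma$ is the smallest element of $\h(\ma)$ under inclusion, and I expect it to play the role of a generic point: the whole of $\h(\ma)$ should be the closure of $\{\ma\}$. This is the structural feature that makes irreducibility almost automatic.

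The key step is to understand what the closed subsets of $\h(\ma)$ look like and, in particular, to exploit the specialization order. First I would recall that in the semisubtractive topology, closed sets are generated by the subbasis $\{\h(\mathfrak{c})\}_{\mathfrak{c} \in \ids}$ via finite unions and arbitrary intersections. The crucial monotonicity fact is that $\h(\mathfrak{c}_1) \cap \h(\mathfrak{c}_2) = \h(\mathfrak{c}_1 + \mathfrak{c}_2)$ (or more precisely $\h(\cz(\mathfrak{c}_1 + \mathfrak{c}_2))$, using Proposition~\ref{lclk}), and that $\ma \in \h(\mathfrak{c})$ precisely when $\mathfrak{c} \subseteq \ma$. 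From this I would argue that any closed set containing the smallest element $\ma$ of $\h(\ma)$ must in fact contain all of $\h(\ma)$: if $\ma$ lies in a closed set $D$, then since $D$ is an intersection of finite unions of basic closed sets, and $\ma$ is the minimum of $\h(\ma)$, every $\mb \supseteq \ma$ specializes from $\ma$ and hence also lies in $D$. In other words, $\overline{\{\ma\}} \cap \h(\ma) = \h(\ma)$, so $\ma$ is a generic point of $\h(\ma)$.

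With the generic point in hand, irreducibility follows by the standard argument. Suppose for contradiction that $\h(\ma) = D_1 \cup D_2$ with $D_1, D_2$ proper closed subsets of $\h(\ma)$. The generic point $\ma$ lies in the union, hence in one of them, say $\ma \in D_1$. But $D_1$ is closed and contains $\ma$, so by the previous step $D_1 \supseteq \overline{\{\ma\}} \cap \h(\ma) = \h(\ma)$, forcing $D_1 = \h(\ma)$ and contradicting properness. Therefore no such decomposition exists and $\h(\ma)$ is irreducible.

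The main obstacle I anticipate is the precise bookkeeping in the second paragraph: one must verify carefully that the specialization order of the semisubtractive space orders $\mb$ above $\ma$ exactly when $\mb \supseteq \ma$ (this is essentially the content underlying Proposition~\ref{t0}), and that a closed set containing a point also contains everything above it in this order. This requires unwinding the definition of the topology in terms of the subbasis and confirming that the closure of a singleton $\{\ma\}$ inside $\h(\ma)$ is indeed all of $\h(\ma)$, rather than merely an upward-closed subset. Once the identification of the specialization order with reverse inclusion is secured, the irreducibility argument is formal and short.
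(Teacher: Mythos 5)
Your proposal is correct and takes essentially the same route as the paper: both prove the key identity $\h(\ma)=\overline{\{\ma\}}$ by noting $\ma\in\h(\ma)$ and unwinding an arbitrary closed set containing $\ma$ as an intersection of finite unions of subbasic sets $\h(\mathfrak{c})$, each upward closed under inclusion, and then conclude irreducibility from the fact that point closures are irreducible. Your packaging of this via the specialization order (with $\ma$ as generic point) is only a cosmetic rephrasing of the paper's subbasis bookkeeping.
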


\begin{proof}
To obtain the desired claim, observe that it is sufficient to prove: $\h(\ma)=\overline{\{\ma\}}$ for all $\ma\in \ids.$ Since $\overline{\{\ma\}}$ is the smallest closed set containing $\ma$, it follows that $\h(\ma)\supseteq\overline{\{\ma\}}$. For the other inclusion, first consider the trivial case of $\overline{\{\ma\}}=\ids.$ For this we have \[\ids=\overline{\{\ma\}}\subseteq \h(\ma)\subseteq \ids,\] and hence $\h(\ma)\subseteq \overline{\{\ma\}}.$ Now suppose that $\overline{\{\ma\}}\neq\ids$ and \[\overline{\{\ma\}}=\bigcap_{\lambda \in \Lambda}\left(\bigcup_{i=1}^{n_{\lambda}}\h (\ma_{i\lambda})  \right).\]
This means that $\ma\in \bigcup_{ i=1}^{n_{\lambda}} \h (\ma_{i\lambda})$ for some $i$ and each $\lambda \in \Lambda$. But from this, we obtain that  
\[\h (\ma) \subseteq \bigcap_{\lambda \in \Lambda}\left(\bigcup_{i=1}^{n_{\lambda}}\h (\ma_{i\lambda})  \right),\] and hence we have the desired inclusion.
\end{proof}

\begin{theorem}
Every semisubtractive space is sober.
\end{theorem}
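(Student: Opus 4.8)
The plan is to exhibit, for each nonempty irreducible closed set $D$ of $\ids$, an explicit generic point, and then deduce uniqueness from the $T_0$ property (Proposition \ref{t0}). The natural candidate is the ideal
\[
\p := \bigcap_{\mc \in D} \mc,
\]
which lies in $\ids$ by Proposition \ref{bpss}(\ref{ssci}). By Lemma \ref{scir} we have $\overline{\{\p\}}=\h(\p)$, so it suffices to prove $D=\h(\p)$. One inclusion is immediate: every $\mc\in D$ contains $\p$ by construction, hence $\mc\in\h(\p)$, giving $D\subseteq \h(\p)$.

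For the reverse inclusion $\h(\p)\subseteq D$, I would write $D$ as an intersection of finite unions of subbasic closed sets,
\[
D = \bigcap_{\lambda \in \Lambda}\left(\bigcup_{i=1}^{n_{\lambda}} \h(\ma_{i\lambda})\right),
\]
exactly as in the proof of Lemma \ref{scir}, and then exploit irreducibility. Recall that irreducibility of $D$ is equivalent to the assertion that whenever $D$ is contained in a union of two closed sets it is already contained in one of them; by induction this extends to any finite union. Applying this to each $\lambda$ yields an index $i(\lambda)$ with $D\subseteq \h(\ma_{i(\lambda)\lambda})$, i.e.\ every $\mc\in D$ contains $\ma_{i(\lambda)\lambda}$, whence $\p=\bigcap_{\mc\in D}\mc \supseteq \ma_{i(\lambda)\lambda}$. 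Since $\h$ is order-reversing, $\h(\p)\subseteq \h(\ma_{i(\lambda)\lambda})$ for every $\lambda$, and therefore
\[
\h(\p) \subseteq \bigcap_{\lambda \in \Lambda} \h(\ma_{i(\lambda)\lambda}) \subseteq \bigcap_{\lambda \in \Lambda}\left(\bigcup_{i=1}^{n_{\lambda}} \h(\ma_{i\lambda})\right) = D.
\]
Combining the two inclusions gives $D=\h(\p)=\overline{\{\p\}}$, so $\p$ is a generic point of $D$.

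For uniqueness, note that if $\p$ and $\p'$ are both generic points of $D$, then $\h(\p)=\h(\p')$, so $\p\supseteq\p'$ and $\p'\supseteq\p$, forcing $\p=\p'$; equivalently, this is just the $T_0$ separation established in Proposition \ref{t0}. The one genuine step is the reduction of each finite union $\bigcup_{i} \h(\ma_{i\lambda})$ to a single term via irreducibility, which is what makes the infimum $\p=\bigcap_{\mc\in D}\mc$ the correct candidate; everything else is bookkeeping with the order-reversing behaviour of $\h$ and the closure formula of Lemma \ref{scir}.
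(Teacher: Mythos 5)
Your proof is correct, and its engine is the same as the paper's: decompose $D$ as an intersection of finite unions of subbasic closed sets, then use irreducibility to replace each finite union by a single subbasic set $\h(\ma_{i(\lambda)\lambda})$ containing $D$. The difference lies in how the generic point is identified afterwards. The paper takes the chosen ideals $\ma_{\lambda}$ and writes $D=\bigcap_{\lambda}\h(\ma_{\lambda})=\h\left(\bigcap_{\lambda}\ma_{\lambda}\right)$; but that identity holds only in the direction $\bigcap_{\lambda}\h(\ma_{\lambda})\subseteq\h\left(\bigcap_{\lambda}\ma_{\lambda}\right)$, and the correct formula (the one used in the proof of Proposition \ref{comp}) is $\bigcap_{\lambda}\h(\ma_{\lambda})=\h\left(\sum_{\lambda}\ma_{\lambda}\right)$, so the paper's generic point ought to be $\sum_{\lambda}\ma_{\lambda}$, which is semisubtractive by Proposition \ref{bpss}(\ref{cuss}). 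You instead take the intrinsic candidate $\p=\bigcap_{\mc\in D}\mc$, semisubtractive by Proposition \ref{bpss}(\ref{ssci}), and invoke only the order-reversing property of $\h$: from $\ma_{i(\lambda)\lambda}\subseteq\p$ you get $\h(\p)\subseteq\h(\ma_{i(\lambda)\lambda})$, hence $\h(\p)\subseteq D$, while $D\subseteq\h(\p)$ is immediate from the definition of $\p$. This buys robustness: you never need an exact evaluation of an infinite intersection of subbasic closed sets, and your argument quietly repairs the defective step in the paper's proof; the two generic points agree in the end, since $\p$ is the least element of $D=\h(\p)$. Uniqueness via the $T_0$ property (Proposition \ref{t0}) is handled identically in both proofs.
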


\begin{proof}
Suppose $D$ is a nonempty irreducible closed subset of a semisubtractive space $S$. Then $D=\bigcap_{\lambda \in \Lambda} \mathcal{E}_{\lambda}$, where each $\mathcal{E}_{\lambda}$ is a finite union of subbasic closed sets of $\tau_s.$ Since $D$ is irreducible, for every $\lambda \in \Lambda$, there exists an $\ma_{\lambda}\in \id$ such that 	\[D\subseteq \cz(\ma_{\lambda})\subseteq \mathcal{E}_{\lambda},\]
and this implies
\[D=\bigcap_{\lambda \in \Lambda}\h(\ma_{\lambda})=\h\left( \bigcap_{\lambda\in \Lambda} \ma_{\lambda} \right)=\overline{\left\{\bigcap_{\lambda\in \Lambda}\ma_{\lambda}\right\}},\]
where, the last equality follows from Lemma \ref{scir}. This proves the existence of the generic point, whereas the uniqueness of it follows from Proposition \ref{t0}.
\end{proof}

\begin{proposition}
Every semisubtractive space is connected.
\end{proposition}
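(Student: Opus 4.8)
The plan is to exhibit a generic point for the whole space and then invoke the standard fact that an irreducible space is connected. First I would recall from \textsection\ref{basic} that the zero ideal $0$ is semisubtractive, so $0\in\ids$, and that $0$ is contained in every semisubtractive ideal of $S$. Consequently
\[
\h(0)=\{\mb\in \ids\mid \mb\supseteq 0\}=\ids,
\]
so the entire space is itself a (nonempty) subbasic closed set.

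Next I would apply Lemma \ref{scir}, which asserts that every nonempty subbasic closed set of a semisubtractive space is irreducible. Taking $\ma=0$, this shows at once that $\h(0)=\ids$ is irreducible; equivalently, the proof of that lemma yields $\overline{\{0\}}=\h(0)=\ids$, so $0$ is a generic point of the whole space. It then remains only to pass from irreducibility to connectedness. Suppose, for contradiction, that $\ids=U\cup V$ with $U$ and $V$ nonempty, open, and disjoint. Then $U=\ids\setminus V$ and $V=\ids\setminus U$ are both proper nonempty \emph{closed} subsets of $\ids$ whose union is $\ids$, contradicting the irreducibility of $\ids$ as established above. Hence no such partition exists, and $\ids$ is connected.

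I do not anticipate a genuine obstacle here; the content of the result is really the single observation that $\h(0)=\ids$, after which everything is formal. The only point requiring care is the bookkeeping of which elementary facts are invoked, namely that $0$ lies in $\ids$ and sits below every semisubtractive ideal, together with the irreducibility statement of Lemma \ref{scir}; once these are combined, connectedness is immediate from the implication ``irreducible $\Rightarrow$ connected.''
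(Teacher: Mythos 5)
Your proposal is correct and follows exactly the paper's argument: observe that $\h(0)=\ids$, invoke Lemma \ref{scir} to get irreducibility, and conclude connectedness from the standard implication ``irreducible $\Rightarrow$ connected.'' You merely spell out the last step (and the genericity of $0$) in more detail than the paper's one-line proof.
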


\begin{proof}
Since $\h(0)=\ids$ and since irreducibility implies connectedness, the claim now follows from Lemma \ref{scir}.
\end{proof}

Using the Alexander subbase theorem, we now show the quasi-compactness of semisubtractive spaces. For this, we assume that $\h(\ma)=\emptyset$ implies $\ma=S$, for every semiring $S$ and every ideal $\ma\in S$.

\begin{proposition}\label{comp} 
Every semisubtractive space is quasi-compact. 
\end{proposition}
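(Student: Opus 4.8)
The plan is to use the Alexander subbase theorem, which reduces quasi-compactness to checking covers by subbasic \emph{open} sets. The subbasic closed sets here are the $\h(\ma)$ for $\ma\in\ids$, so the subbasic open sets are their complements $\ids\setminus\h(\ma)$. By the Alexander subbase theorem, it suffices to show that whenever a family $\{\ids\setminus\h(\ma_\lambda)\}_{\lambda\in\Lambda}$ covers $\ids$, some finite subfamily already covers $\ids$. I would translate this covering condition into a statement about the $\ma_\lambda$ and then exploit the stated convention that $\h(\ma)=\emptyset$ forces $\ma=S$.

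First I would rewrite the covering hypothesis in closed-set language: the family of open sets covers $\ids$ exactly when the intersection of the corresponding closed sets is empty, i.e.
\[
\bigcap_{\lambda\in\Lambda}\h(\ma_\lambda)=\emptyset.
\]
The next step is to recognize this intersection as a single subbasic closed set. By Proposition~\ref{lclk}(\ref{arbin}) the $G$-closure commutes with arbitrary intersections, and the sets $\h(\ma)$ are determined by the semisubtractive closure of $\ma$; in fact $\bigcap_{\lambda}\h(\ma_\lambda)=\h\!\left(\sum_{\lambda}\ma_\lambda\right)$, since a semisubtractive ideal $\mb$ contains every $\ma_\lambda$ if and only if it contains their sum. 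Writing $\ma:=\sum_{\lambda\in\Lambda}\ma_\lambda$, the covering hypothesis becomes $\h(\ma)=\emptyset$, which by our standing assumption gives $\ma=S$.

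Now $S=\sum_{\lambda\in\Lambda}\ma_\lambda$ means in particular that $1\in\sum_{\lambda}\ma_\lambda$, and since any element of a sum of ideals lies in a finite subsum, there is a finite subset $\{\lambda_1,\dots,\lambda_n\}\subseteq\Lambda$ with $1\in\ma_{\lambda_1}+\cdots+\ma_{\lambda_n}$. Consequently $\ma_{\lambda_1}+\cdots+\ma_{\lambda_n}=S$, so $\h\!\left(\ma_{\lambda_1}+\cdots+\ma_{\lambda_n}\right)=\h(S)=\emptyset$ (as $S$ is the only semisubtractive ideal containing $S$, and no \emph{proper} semisubtractive ideal does, giving the empty intersection). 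Retracing the first step, $\bigcap_{i=1}^{n}\h(\ma_{\lambda_i})=\emptyset$, which is precisely the statement that the finite subfamily $\{\ids\setminus\h(\ma_{\lambda_i})\}_{i=1}^{n}$ already covers $\ids$. By the Alexander subbase theorem, $\ids$ is quasi-compact.

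The main obstacle I anticipate is justifying the identity $\bigcap_{\lambda}\h(\ma_\lambda)=\h\!\left(\sum_\lambda\ma_\lambda\right)$ cleanly, and making sure the reduction from "$1$ lies in the sum" to "$1$ lies in a finite subsum" is valid — the latter is the genuinely finitary input that makes Alexander's theorem applicable, and it relies only on the fact that an ideal sum consists of finite sums of elements, so it should go through without difficulty. A secondary point to handle carefully is the degenerate case $\Lambda=\emptyset$ or the case where the sum equals $S$ via a single index; both are absorbed by the argument above since a finite (possibly singleton) subsum suffices.
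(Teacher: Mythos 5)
Your proof is correct and follows essentially the same route as the paper: both reduce via the Alexander subbase theorem to subbasic closed sets, use the identity $\bigcap_{\lambda}\h(\ma_\lambda)=\h\bigl(\sum_{\lambda}\ma_\lambda\bigr)$ together with the standing assumption that $\h(\ma)=\emptyset$ forces $\ma=S$, and then extract a finite subfamily from the fact that $1$ lies in a finite subsum of the $\ma_\lambda$.
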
 

\begin{proof}   
Let  $\{K_{ \lambda}\}_{\lambda \in \Lambda}$ be a family of subbasic closed sets of a semisubtractive space $\ids$   such that $\bigcap_{\lambda\in \Lambda}K_{ \lambda}=\emptyset.$ Let $\{\ma_{ \lambda}\}_{\lambda \in \Lambda}$ be a family of ideals of $S$ such  that $\forall \lambda \in \Lambda,$ we have $K_{ \lambda}=\h(\ma_{ \lambda}).$  Since \[\bigcap_{\lambda \in \Lambda}\h(\ma_{ \lambda})=\h\left(\sum_{\lambda \in \Lambda}\ma_{ \lambda}\right),\] we get  $\h\left(\sum_{\lambda \in \Lambda}\ma_{ \lambda}\right)=\emptyset,$ and that  implies $ \sum_{\lambda \in \Lambda}\ma_{ \lambda}=S.$ Then, in particular, we obtain $1=\sum_{\lambda_i\in \Lambda}x_{ \lambda_i},$ where $x_{ \lambda_i}\in \ma_{\lambda_i}$ and $x_{ \lambda_i}\neq 0$ for $i=1, \ldots, n$. This shows    $S=\sum_{  i \, =1}^{ n}\ma_{\lambda_i}.$ Therefore,   $\bigcap_{ i\,=1}^{ n}K_{ \alpha_i}=\emptyset,$ and hence by the Alexander subbase theorem, $\ids$ is quasi-compact.  
\end{proof}  

Next, we study continuous maps between semisubtractive spaces. Thanks to Proposition \ref{cep}(\ref{jcki}), we will see that every semiring homomorphism induces a canonical continuous map between the corresponding semisubtractive spaces.

\begin{proposition}\label{conmap}
Let $\phi\colon S\to T$ be a semiring homomorphism  and $\mb\in\idss.$

\begin{enumerate}
		
\item\label{contxr} The induced map $\phi_*\colon  \idss\to \ids$ defined by  $\phi_*(\mb)=\phi\inv(\mb)$ is    continuous.
		
\item\label{shke} If $\phi$ is  surjective, then the semisubtractive space $\idss$ is homeomorphic to the closed subspace $\h(\mathrm{Ker}\phi)$ of the semisubtractive space $\ids.$	

\item\label{den} The subset  $\phi_*(\idss)$ is dense in $\ids$ if and only if  $\mathrm{Ker}\phi\subseteq \bigcap_{\ma\in \ids}\ma.$
\end{enumerate}
\end{proposition}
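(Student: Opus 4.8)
The plan is to prove the three parts of Proposition~\ref{conmap} in order, drawing repeatedly on the fact (Proposition~\ref{cep}(\ref{jcki})) that the contraction $\phi\inv(\mb)$ of a semisubtractive ideal is again semisubtractive, so that the maps in question are well defined. For part~(\ref{contxr}), to show $\phi_*$ is continuous it suffices to check that the preimage of each subbasic closed set is closed. So I would fix an ideal $\ma$ of $S$ and compute $\phi_*\inv(\h(\ma))$; the key identity to establish is
\[
\phi_*\inv(\h(\ma))=\h(\cz(\phi(\ma))),
\]
or more directly that $\phi_*\inv(\h(\ma))=\{\mb\in\idss\mid \phi\inv(\mb)\supseteq\ma\}$ equals $\{\mb\in\idss\mid \mb\supseteq\phi(\ma)\}$, which is itself a subbasic closed set of $\idss$. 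The equivalence $\phi\inv(\mb)\supseteq\ma\iff\mb\supseteq\phi(\ma)$ is a routine adjunction-style check using that $\ma\subseteq\phi\inv(\mb)$ means $\phi(\ma)\subseteq\mb$. Hence preimages of subbasic closed sets are subbasic closed, and continuity follows since the topology is generated by these sets.

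For part~(\ref{shke}), with $\phi$ surjective, I would first observe that the image of $\phi_*$ lands inside $\h(\mathrm{Ker}\phi)$: for any semisubtractive $\mb$ of $T$, its contraction $\phi\inv(\mb)$ contains $\phi\inv(0)=\mathrm{Ker}\phi$, so $\phi_*(\mb)\in\h(\mathrm{Ker}\phi)$. The plan is then to exhibit the inverse of the correspondence. By surjectivity, Proposition~\ref{cep}(4) gives that $\phi(\ma)$ is semisubtractive in $T$ whenever $\ma$ is semisubtractive in $S$, and the standard lattice isomorphism theorem for (semi)ideals tells us that contraction and extension are mutually inverse, order-preserving bijections between the semisubtractive ideals of $S$ containing $\mathrm{Ker}\phi$ and all semisubtractive ideals of $T$. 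I would verify that $\phi_*$ restricted to the target $\h(\mathrm{Ker}\phi)$ is exactly this bijection, that it is continuous by part~(\ref{contxr}) (restriction of a continuous map), and that its inverse $\mb\mapsto\phi(\mb)$ is continuous by the same subbasic-closed-set computation as above. This furnishes the homeomorphism.

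For part~(\ref{den}), density of $\phi_*(\idss)$ means its closure is all of $\ids$; since $\h(\ma)$ for $\ma\in\ids$ are the subbasic closed sets and $\h(0)=\ids$ is the whole space, I would unwind density into a statement about which subbasic closed sets can contain $\phi_*(\idss)$. The cleanest route: $\phi_*(\idss)$ is dense iff the smallest closed set containing it is $\ids$; equivalently, iff every point of $\ids$ lies in the closure. I would use the characterization $\overline{\{\ma\}}=\h(\ma)$ from Lemma~\ref{scir} together with the observation that the intersection of all ideals in the image of $\phi_*$ equals $\mathrm{Ker}\phi$'s Golan closure, and relate density to the condition $\mathrm{Ker}\phi\subseteq\bigcap_{\ma\in\ids}\ma$. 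Concretely, the closure of $\phi_*(\idss)$ is the subbasic closed set $\h\bigl(\bigcap_{\mb}\phi\inv(\mb)\bigr)$ where the intersection ranges over semisubtractive $\mb$ of $T$; this equals $\ids$ precisely when $\bigcap_{\mb}\phi\inv(\mb)$ is contained in every semisubtractive ideal of $S$, i.e.\ in $\bigcap_{\ma\in\ids}\ma$, and since $\mathrm{Ker}\phi=\phi\inv(0)\subseteq\phi\inv(\mb)$ drives this intersection, the condition reduces to $\mathrm{Ker}\phi\subseteq\bigcap_{\ma\in\ids}\ma$.

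The main obstacle I expect is part~(\ref{den}): pinning down exactly the closure of the image set $\phi_*(\idss)$ and making rigorous the reduction from "closure equals the whole space" to the clean containment $\mathrm{Ker}\phi\subseteq\bigcap_{\ma\in\ids}\ma$. One must be careful that the subbasic closed sets only generate the topology (they are not closed under finite unions, as remarked before Proposition~\ref{t0}), so the closure of an arbitrary subset is an intersection of finite unions of subbasic sets rather than a single $\h(\cdot)$; identifying the smallest such closed set containing $\phi_*(\idss)$ and showing it collapses to a single $\h$ of the relevant intersection is the delicate point, and the bijection/homeomorphism bookkeeping in part~(\ref{shke}) is the second place where care with well-definedness of the extension map is needed.
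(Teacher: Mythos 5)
Your parts (\ref{contxr}) and (\ref{den}) are essentially sound. For (\ref{contxr}) you reproduce the paper's computation $\phi_*\inv(\h(\ma))=\{\mb\in\idss\mid \mb\supseteq\phi(\ma)\}$; your extra step of replacing $\phi(\ma)$ by the smallest semisubtractive ideal of $T$ containing it is a point where you are more careful than the paper, which simply writes this set as $\h(\phi(\ma))$ even though $\phi(\ma)$ need not be an ideal of $T$. For (\ref{den}) your route is genuinely different from, and more self-contained than, the paper's: the paper gets $\overline{\phi_*(\idss)}=\h(\mathrm{Ker}\phi)$ by invoking the closed-map identity $\phi_*(\h(\ma))=\h(\phi\inv(\ma))$ from its proof of (\ref{shke}), which was derived under a surjectivity hypothesis that (\ref{den}) does not assume, whereas your argument needs nothing from part (\ref{shke}). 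Completed, it reads: $\h\bigl(\bigcap_{\mb\in\idss}\phi\inv(\mb)\bigr)$ is a closed set containing $\phi_*(\idss)$, and $\bigcap_{\mb\in\idss}\phi\inv(\mb)=\phi\inv(0)=\mathrm{Ker}\phi$ because $0\in\idss$; on the other hand $\mathrm{Ker}\phi=\phi\inv(0)$ is itself a point of $\phi_*(\idss)$, so Lemma \ref{scir} gives $\overline{\phi_*(\idss)}\supseteq\overline{\{\mathrm{Ker}\phi\}}=\h(\mathrm{Ker}\phi)$. Hence the closure is exactly $\h(\mathrm{Ker}\phi)$, which equals $\ids$ if and only if $\mathrm{Ker}\phi\subseteq\bigcap_{\ma\in\ids}\ma$. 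This also settles the delicate point you flagged: the closure collapses to a single subbasic set precisely because the point $\mathrm{Ker}\phi$ lies in the image.

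The genuine gap is in part (\ref{shke}). You appeal to ``the standard lattice isomorphism theorem'' to assert that contraction and extension are mutually inverse bijections between $\idss$ and the semisubtractive ideals of $S$ containing $\mathrm{Ker}\phi$. No such theorem holds for semirings: a surjective semiring homomorphism is not determined by its kernel, and an ideal containing $\mathrm{Ker}\phi$ need not be saturated, i.e.\ $\phi\inv(\phi(\ma))=\ma$ can fail. Concretely, let $\mathds{B}=\{0,1\}$ be the Boolean semiring and $\phi\colon\mathds{N}\to\mathds{B}$ the surjective homomorphism with $\phi(0)=0$ and $\phi(n)=1$ for $n\geq 1$, so that $\mathrm{Ker}\phi=0$. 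The paper's own example $\ma=\mathds{N}\setminus\{1\}$ is a semisubtractive ideal containing $\mathrm{Ker}\phi$, yet $\phi(\ma)=\mathds{B}$ and $\phi\inv(\phi(\ma))=\mathds{N}\neq\ma$. Indeed $\mathcal{I}_{s}(\mathds{B})$ has exactly two points while $\h(\mathrm{Ker}\phi)=\mathcal{I}_{s}(\mathds{N})$ is infinite (every ideal of $\mathds{N}$ is semisubtractive), so $\phi_*$ is not surjective onto $\h(\mathrm{Ker}\phi)$, and no bijection --- let alone a homeomorphism --- can exist. For what it is worth, the paper's proof of (\ref{shke}) makes the same unjustified leap: it asserts $\mathrm{Im}\,\phi_*=\h(\mathrm{Ker}\phi)$ and $\phi_*(\h(\ma))=\h(\phi\inv(\ma))$, of which only the inclusions ``$\subseteq$'' are actually proved. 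So your proposal mirrors the paper's gap rather than introducing a new one; but it is a gap all the same, and the example above shows it cannot be repaired without an additional saturation-type hypothesis on the ideals of $S$ or on $\phi$.
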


\begin{proof}      
To show (\ref{contxr}), let $\ma\in \ids$ and $\h(\ma)$ be a   subbasic closed set of the lower space $\ids.$ Then  
\begin{align*}
\phi_*\inv(\h(\ma)) &=\left\{ \mb\in  \idss\mid \phi\inv(\mb)\in \h(\ma)\right\}\\&=\left\{\mb\in \idss\mid \phi(\ma)\subseteq  \mb\right\}\\&=\h(\phi(\ma)), 
\end{align*} 
and hence, the map $\phi_*$  continuous.  
For (\ref{shke}), observe that $\mathrm{Ker}\phi\subseteq  \phi\inv(\mb)$ follows from the fact that  $0\subseteq  \mb$ for all $\mb\in \idss.$ It can thus been seen that $\phi_*(\mb)\in \h(\mathrm{Ker}\phi),$ and hence $\mathrm{Im}\phi_*=\h(\mathrm{Ker}\phi).$  
If $\mb\in \idss,$ then
\[\phi\left(\phi_*\left(\mb\right)\right)=\phi\left(\phi\inv\left(\mb\right)\right)=\mb.\]
Thus $\phi_*$ is injective. To show that $\phi_*$ is closed,  observe that for any   subbasic closed set  $\h(\ma)$ of  $\idss$, we have
\begin{align*}
\phi_*\left(\h(\ma)\right)&=  \phi\inv\left(\h(\ma)\right)\\&=\phi\inv\left\{ \mb\in \idss\mid \ma\subseteq   \mb\right\}\\&=\h(\phi\inv(\ma)). 
\end{align*}
Now if $C$ is a closed subset of $\idss$ and $C=\bigcap_{ \omega \in \Omega} \left(\bigcup_{ i \,= 1}^{ n_{\omega}} \h(x_{ i\omega})\right),$ then
\begin{align*}
\phi_*(C)&=\phi\inv \left(\bigcap_{ \omega \in \Omega} \left(\bigcup_{ i = 1}^{ n_{\omega}} \h(x_{ i\omega})\right)\right)\\&=\bigcap_{ \omega \in \Omega}\, \bigcup_{ i = 1}^{n_{\omega}} \phi\inv\left(\h(x_{ i\omega})\right)
\end{align*}
a closed subset of  $\ids.$ Since by (\ref{contxr}), $\phi_*$ is continuous, we have the desired claim.
Finally, to prove (\ref{den}), 
observe that \[\overline{{\phi_*\left(\idss\right)}}=\phi_*(\h(0))=\h\left(\phi\inv(0)\right)=\h\left(\mathrm{Ker}\phi\right).\] 
It is now sufficient to show that $\h(\mathrm{Ker}\phi)$ to be equal to $\ids$ if and only if  the given condition is satisfied. 
Assume \( \h(\mathrm{Ker}\phi) = \ids \). 
It follows that every \( \ma \in \ids \) satisfies \( k \in \ma \) for all \( k \in \mathrm{Ker}\phi \). Thus
$k \in \ma$  for all $x \in \ids$ and $k \in \mathrm{Ker}\phi.$
From that, we obtain
$
k \in \bigcap_{\ma \in \ids} \ma$, for every $k \in \mathrm{Ker}\phi.
$

Now assume that \( k \in \bigcap_{\ma \in \ids} \ma \) for every \( k \in \mathrm{Ker}\phi \).
Given \( k \in \bigcap_{\ma \in \ids} \ma \), it follows that
$
k \in \ma$ for every  $\ma \in \ids.$
Thus, each \( \ma \in \ids \) contains all elements of \( \mathrm{Ker}\phi \), which implies that
$
\ma \in \h(\mathrm{Ker}\phi)$ for every  $\ma \in \ids.$
Since this is true for all \( \ma \in \ids \), we must have
$
\h(\mathrm{Ker}\phi) = \ids.
$
\end{proof}  

\subsection{$s$-congruences}\label{scon}

Motivated by \cite{Han21}, our final aim is to define and study a congruence related to a semisubtractive ideal.
Recall that	a \emph{congruence} on a semiring \( S \) is an equivalence relation \( \sim \) that is compatible with the semiring operations, that is, for all  $x$, $y$, $z \in S$:
\begin{enumerate}
\item[$\bullet$] \( x \sim y \Rightarrow x + z \sim y + z \),

\item[$\bullet$] \( x \sim y \Rightarrow xz \sim yz \).
\end{enumerate}
Given a semisubtractive ideal \( \mathfrak{a} \) of \( S \),  we shall now construct a congruence corresponding to \( \mathfrak{a} \).
Define a relation \( \sim_{\mathfrak{a}} \) on \( S \) by
\[
x \sim_{\mathfrak{a}} y \Leftrightarrow x - y \in \mathfrak{a} \cap \vs.
\]
The following proposition confirms that \( \sim_{\mathfrak{a}} \) is indeed a congruence.

\begin{proposition}
The relation \( \sim_{\mathfrak{a}} \) is an equivalence relation and is compatible with both the addition and multiplication of the semiring.
\end{proposition}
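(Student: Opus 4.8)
The plan is to verify, in turn, the three defining properties of an equivalence relation and then the two compatibility conditions, reading the relation concretely as follows: since subtraction is not a priori available in $S$, I interpret $x\sim_{\mathfrak{a}}y$ as the assertion that there is an element $t\in\mathfrak{a}\cap\vs$ with $x=y+t$, so that this $t$ plays the role of the difference $x-y$. Throughout I would lean on three facts already in hand: $\mathfrak{a}$ is an ideal, so $0\in\mathfrak{a}$ and $\mathfrak{a}$ is closed under addition and under multiplication by elements of $S$; $\vs$ is a submonoid of $(S,+)$ by Lemma \ref{epvs}(2); and $\mathfrak{a}$ is semisubtractive, so $t\in\mathfrak{a}\cap\vs$ forces $-t\in\mathfrak{a}\cap\vs$.

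Reflexivity is immediate, since $0\in\mathfrak{a}\cap\vs$ and $x=x+0$. For transitivity, from $x=y+t$ and $y=z+t'$ with $t,t'\in\mathfrak{a}\cap\vs$ I would substitute to obtain $x=z+(t+t')$, and then note that $t+t'\in\mathfrak{a}$ by additive closure of the ideal and $t+t'\in\vs$ by the submonoid property, so that $x\sim_{\mathfrak{a}}z$.

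The step where the hypothesis genuinely does work — and the one I would flag as the crux — is symmetry. Given $x=y+t$ with $t\in\mathfrak{a}\cap\vs$, the natural witness for $y\sim_{\mathfrak{a}}x$ is $-t$. This element exists precisely because $t\in\vs$, and it lies in $\mathfrak{a}\cap\vs$ exactly because $\mathfrak{a}$ is semisubtractive; a mere ideal would not suffice here. One then checks $x+(-t)=(y+t)+(-t)=y$, giving $y=x+(-t)$, as required. This is the only point at which the semisubtractivity of $\mathfrak{a}$ is indispensable, and it is what turns a one-sided relation into a symmetric one.

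Compatibility with addition is the easy half: if $x=y+t$, then for any $z\in S$ one has $x+z=(y+z)+t$ with the same witness $t$, so $x+z\sim_{\mathfrak{a}}y+z$. For compatibility with multiplication, from $x=y+t$ I would pass to $xz=yz+tz$; here $tz\in\mathfrak{a}$ by the absorption property of the ideal, and $tz\in\vs$ because $(-t)z$ serves as its additive inverse, since $tz+(-t)z=(t+(-t))z=0$ by distributivity. Hence $tz\in\mathfrak{a}\cap\vs$ and $xz\sim_{\mathfrak{a}}yz$. All five verifications are short, with the substance concentrated in the symmetry argument.
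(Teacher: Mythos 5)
Your proof is correct and takes essentially the same route as the paper's: reflexivity via $0$, symmetry via the additive inverse of the difference (the one place where semisubtractivity of $\mathfrak{a}$ is needed, as you rightly flag), transitivity by adding the two witnesses, and compatibility with addition and multiplication by keeping the same witness and multiplying it by $z$, respectively. Your explicit witness reading of $x-y$ (an element $t\in\mathfrak{a}\cap\vs$ with $x=y+t$) is simply a more careful rendering of what the paper manipulates formally as $x-y$, so the two arguments coincide in substance.
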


\begin{proof}
For any \( x \in S \), it is obvious that
$ 
x - x = 0 \in \mathfrak{a} \cap \vs
$. Thus, \( x \sim_{\mathfrak{a}} x \). 	If \( x\sim_{\mathfrak{a}} y \), then \( x - y \in \mathfrak{a} \cap \vs \), so there exists \( z \in S \) such that \( z + (x - y) = 0 \), that is, \( z = y - x \in \mathfrak{a} \cap \vs\). Hence, \( y \sim_{\mathfrak{a}} x \). Suppose \( x \sim_{\mathfrak{a}} y \) and \( y \sim_{\mathfrak{a}} z \). Then
$
x - y \in \mathfrak{a} \cap \vs $ and $y - z \in \mathfrak{a} \cap \vs.
$
Adding these gives
\[
(x - y) + (y - z) = x - z \in \mathfrak{a} \cap \vs,
\]
so \( x \sim_{\mathfrak{a}} z \).
Thus, \( \sim_{\mathfrak{a}} \) is an equivalence relation.
Suppose \( x \sim_{\mathfrak{a}} y \), that is, \( x - y \in \mathfrak{a} \cap \vs\). For any \( z \in S \),
\[
(x + z) - (y + z) = (x - y) \in \mathfrak{a} \cap \vs,
\]
so \( x + z \sim_{\mathfrak{a}} y + z \). Thus, \( \sim_{\mathfrak{a}} \) is compatible with addition.
Now suppose \( x \sim_{\mathfrak{a}} y \), that is, \( x - y \in \mathfrak{a} \cap \vs\). For any \( z \in S \),
$
xz - yz = (x - y)z.
$
Since \( \mathfrak{a} \) is an ideal, \( (x - y)z \in \mathfrak{a} \). Moreover, because \( x - y \in \vs\) and \( S \) is a semiring, we must have \( (x - y)z \in \vs \). Thus, \( xz \sim_{\mathfrak{a}} yz \), and \( \sim_{\mathfrak{a}} \) is compatible with multiplication.
Therefore, \( \sim_{\mathfrak{a}} \) is a congruence on \( S \).
\end{proof}

The congruence defined with respect to a semisubtractive ideal will be called \emph{$s$-congruence}.

\begin{theorem}
There is a bijection between the set of semisubtractive ideals of a semiring \( S \) and the set of $s$-congruences on \( S \).	
\end{theorem}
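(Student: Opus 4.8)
The plan is to exhibit a pair of mutually inverse maps. In one direction I already have $\Phi\colon \ids \to \{s\text{-congruences}\}$, $\Phi(\ma)={\sim_{\ma}}$, which by the preceding proposition does land in the set of $s$-congruences. For the reverse direction I would send a congruence $\sim$ to the class of $0$, namely $\Psi(\sim):=\{x\in S\mid x\sim 0\}$. The first task is to check $\Psi(\sim)\in\ids$. Reflexivity gives $0\in\Psi(\sim)$; if $x\sim 0$ then adding $y$ with $y\sim0$ and using transitivity gives $x+y\sim 0$, so $\Psi(\sim)$ is additively closed; compatibility with multiplication gives $xz\sim 0\cdot z=0$, hence absorption; and if $x\in\Psi(\sim)\cap\vs$, then adding $-x$ to $x\sim 0$ yields $0\sim -x$, so $-x\in\Psi(\sim)$. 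Thus $\Psi(\sim)$ is a semisubtractive ideal, and this verification uses only that $\sim$ is a congruence compatible with both operations.

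The heart of the proof is verifying that the two composites are the identity. Starting from an $s$-congruence $\sim={\sim_{\mb}}$, one reconstructs the relation from its zero class, so $\Phi(\Psi(\sim))=\sim$ should follow by unwinding the definition $x\sim_{\mb}y\Leftrightarrow x-y\in\mb\cap\vs$ together with the fact that a nonzero relation $x\sim y$ forces $y\in\vs$ (so $x-y$ is defined) and then $x\in\vs$ by Lemma \ref{epvs}(3). Surjectivity of $\Phi$ is then immediate, since by definition every $s$-congruence is of the form $\sim_{\ma}$, so the entire content of the theorem is the injectivity of $\Phi$, equivalently the identity $\Psi\circ\Phi=\mathrm{id}$.

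I expect this recovery step $\Psi(\Phi(\ma))=\ma$ to be the main obstacle. Computing directly, the $\sim_{\ma}$-class of $0$ is $\{x\mid x-0\in\ma\cap\vs\}=\ma\cap\vs$, so a priori the congruence detects only the invertible part $\ma\cap\vs$ of $\ma$ and not the behaviour of $\ma$ outside $\vs$. The delicate point of the argument is therefore to show that no information about $\ma$ beyond $\ma\cap\vs$ is actually lost, i.e.\ that the passage $\ma\mapsto\ma\cap\vs$ is injective on $\ids$ and that $\Psi\circ\Phi$ returns $\ma$ itself; this is where the semisubtractivity hypothesis must be exploited in full, and it is the place I would concentrate the careful work before assembling the two displays into the claimed bijection.
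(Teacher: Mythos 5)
Your construction of $\Phi$ and $\Psi$, and your verification that the zero class $\Psi(\sim)$ is a semisubtractive ideal, coincide with the paper's proof. But your proposal stops exactly at the step that carries all of the content, and that step is a genuine gap, not a deferrable computation: you correctly observe that the $\sim_{\ma}$-class of $0$ is $\ma\cap\vs$, and then propose to show that the assignment $\ma\mapsto\ma\cap\vs$ is injective on $\ids$. It is not, so no amount of careful work can close this gap. Take $S=\mathds{N}$, where $\vs=\{0\}$. Every ideal of $\mathds{N}$ is then vacuously semisubtractive; in particular $\ma_1=\{0\}$ and $\ma_2=\mathds{N}\setminus\{1\}$ (the paper's own example of a semisubtractive ideal that is not subtractive) both lie in $\ids$, and $\ma_1\cap\vs=\{0\}=\ma_2\cap\vs$. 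Consequently $\sim_{\ma_1}$ and $\sim_{\ma_2}$ are the same relation (equality), so $\Phi$ is not injective and $\Psi(\Phi(\ma_2))=\{0\}\neq\ma_2$. Semisubtractivity only says that $\ma\cap\vs$ is closed under negation; it does not force $\ma\subseteq\vs$, which is what $\Psi(\Phi(\ma))=\ma$ would require.

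You should also know that the paper's own proof stumbles at this very point: it asserts that $\{x\in S\mid x\in\ma\cap\vs\}$ ``is exactly $\ma$ \ldots\ by the semisubtractive property,'' which conflates ``every element of $\ma$ that has an additive inverse has that inverse in $\ma$'' with ``every element of $\ma$ has an additive inverse.'' So your diagnosis of where the difficulty sits is sharper than the paper's treatment, but as a proof your proposal is incomplete, and the identity it needs is false as stated. A repaired statement would restrict one side of the correspondence, e.g.\ to semisubtractive ideals contained in $\vs$: for such ideals $\Psi(\Phi(\ma))=\ma\cap\vs=\ma$ does hold, and your argument for the other composite $\Phi(\Psi(\sim))=\sim$ (using that any non-reflexive pair related by $\sim_{\mb}$ consists of elements of $\vs$, via Lemma \ref{epvs}(3)) then goes through. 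Your remark that surjectivity of $\Phi$ is definitional is correct, and is the one part of the claimed bijection that survives unconditionally.
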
	

\begin{proof}
Let \( \cs \) be the set of all $s$-congruences on \( S \). 
We define a map \( \Phi\colon \ids \to \cs \) by \( \Phi(\mathfrak{a}) := \sim_{\mathfrak{a}} \).
Given an $s$-congruence \( \sim\) in \(\cs \), define the corresponding semisubtractive ideal \( \mathfrak{a}_{\sim} \) by
\[
\mathfrak{a}_{\sim} := \{ x \in S \mid x \sim 0 \}.
	\]
We claim  that \( \mathfrak{a}_{\sim} \) is a semisubtractive ideal. Indeed:
\begin{itemize}
\item[$\bullet$] If \( x \sim 0 \) and \( y \sim 0 \), then \( x + y \sim 0 \) and \( sx \sim 0 \) for all $s \in S$. Hence \( \mathfrak{a}_{\sim} \) is an ideal.

\item[$\bullet$] If \( x \in \mathfrak{a}_{\sim} \cap \vs \), then \( x \sim 0 \) and \( -x \sim 0 \), so \( -x \in \mathfrak{a}_{\sim} \). So, \( \mathfrak{a}_{\sim} \) is semisubtractive.
\end{itemize}
Thus, we define a  map \( \Psi\colon \cs \to \ids \) by \( \Psi(\sim) = \mathfrak{a}_{\sim} \).
Let us start with a semisubtractive ideal \( \mathfrak{a} \in \ids \). Under the map \( \Phi \), we associate the $s$-congruence \( \sim_{\mathfrak{a}} \) defined by
\[
x \sim_{\mathfrak{a}} y \Leftrightarrow x - y \in \mathfrak{a} \cap \vs.
\]
Apply \( \Psi \) to this $s$-congruence and the resulting semisubtractive ideal is
\[
\mathfrak{a}_{\sim_{\mathfrak{a}}} = \{ x \in S \mid x \sim_{\mathfrak{a}} 0 \} = \{ x \in S \mid x \in \mathfrak{a} \cap \vs \}.
\]
But this set is exactly \( \mathfrak{a} \) because by definition, \( \mathfrak{a} \cap \vs \) contains all the elements in \( \mathfrak{a} \) with additive inverses, and by the semisubtractive property, all such elements \( x \in \mathfrak{a} \) must also have \( -x \in \mathfrak{a} \). Hence,
\[
\Psi(\Phi(\mathfrak{a})) = \mathfrak{a}.
\]
Now start with an $s$-congruence $ \sim$ in \(\cs \). Under the map \( \Psi \), we associate the semisubtractive ideal \( \mathfrak{a}_{\sim} := \{ x \in S \mid x \sim 0 \} \). Now, apply \( \Phi \) to this semisubtractive ideal. The resulting $s$-congruence is \( \sim_{\mathfrak{a}_{\sim}} \), defined by
\[
x \sim_{\mathfrak{a}_{\sim}} y \Leftrightarrow x - y \in \mathfrak{a}_{\sim} \cap \vs.
\]
But \( \mathfrak{a}_{\sim} = \{ x \in S \mid x \sim 0 \} \), so \( x \sim_{\mathfrak{a}_{\sim}} y \) if and only if \( x - y \sim 0 \), that is, \( x \sim y \). Hence,
$
\Phi(\Psi(\sim)) = \sim.
$
Therefore, the maps \( \Phi \) and \( \Psi \) are inverses of each other. 
\end{proof}

\subsection*{Concluding remarks}\label{ccr}

In this paper, we have explored how the theory of various subclasses of semisubtractive ideals can be generalized. However, many other aspects remain unaddressed.

Extensive structural aspects of strongly irreducible ideals of rings have been developed in \cite{HRR02}, and some of them have been extended to strongly irreducible 
$k$-ideals in \cite{AA08}. It would be interesting to investigate how far these results can be further generalized to 
$s$-strongly irreducible ideals as defined in this paper. In \cite{S16}, various topologies have been considered on strongly irreducible ideals, and these ideals have also been studied in the context of truncated valuation rings. Naturally, it would be worthwhile to extend these studies to 
$s$-strongly irreducible ideals as well.

In a series of papers \cite{Dal76, Dal762, Dal77, Dal79, AA94}, 
$k$-ideals have been studied in connection with monic and monic free ideals of polynomial semirings, and polynomial semirings in several variables. Once again, it is expected that these results can be generalized to semisubtractive ideals.
 

\subsection*{Compliance with Ethical Standards}
The following are not applicable with respect to this article.
\begin{enumerate}
	\item Disclosure of potential conflicts of interest.
	
	\item Research involving human participants and/or animals.
	
	\item Informed consent.
	
	\item Funds received.
	
	\item Data availability.
\end{enumerate}

\end{document}